\DeclareSymbolFontAlphabet{\mathbb}{AMSb}
\DeclareSymbolFontAlphabet{\mathbbl}{bbold}
\newcommand{\mathd}{\mathop{}\!\mathrm{d}}
\newcommand{\1}{\operatorname{\mathbbl{1}}}
\renewcommand{\i}{\operatorname{\mathbbl{i}}}
\renewcommand{\j}{\operatorname{\mathbbl{j}}}
\renewcommand{\k}{\operatorname{\mathbbl{k}}}
\newcommand{\U}{\mathcal{U}}
\newcommand{\V}{\mathcal{V}}
\renewcommand{\Re}{\mathop{\textup{Re}}}
\renewcommand{\Im}{\mathop{\textup{Im}}}
\newtheorem{theorem}{Theorem}
\newtheorem{definition}{Definition}
\newtheorem{lemma}{Lemma}
\newtheorem{remark}{Remark}
\begin{document}

\title{ Discrete CMC surfaces in $\mathbb{R}^3$ and \\
discrete minimal surfaces in $\mathbb{S}^3$.\\
A discrete Lawson correspondence}
\author{Alexander I. Bobenko, Pascal Romon}

\maketitle
\abstract{The main result of this paper is a discrete Lawson correspondence between discrete CMC surfaces in ${\mathbb R}^3$ and discrete minimal surfaces in ${\mathbb S}^3$. 
This is a correspondence between two discrete isothermic surfaces. 
We show that this correspondence is an isometry in the following sense: it preserves the metric coefficients introduced previously  by Bobenko and Suris for isothermic nets. Exactly as in the smooth case, this is a correspondence between nets with the same Lax matrices, and the immersion formulas also coincide with the smooth case.}

\section{Introduction}

\medskip

The Lawson correspondence states \cite{Lawson} that for any minimal surface in ${\mathbb S}^3$ there exists an isometric constant mean curvature surface in ${\mathbb R}^3$. It is an important tool for the investigation and construction of CMC surfaces. In particular it was a crucial tool for the classification of trinoids in \cite{KGBKS} and for the numerical construction of examples of CMC surfaces with higher topology in \cite{KGBP}. For the last purpose it was important to integrate it once and to formulate it terms of the corresponding frames \cite{OP} (see also Theorem~\ref{thm:Lawson_smooth}).

Although discrete CMC surfaces in ${\mathbb R}^3$ have been known for a longtime already \cite{BP}, as well as discrete minimal surfaces in ${\mathbb S}^3$ \cite{BHL, BuHRS}, the discrete Lawson correspondence has remained a challenge. The main problem was to define a proper discrete analogue of isometry. 

The main result of this paper is a discrete Lawson correspondence between discrete CMC surfaces in ${\mathbb R}^3$ and discrete minimal surfaces in ${\mathbb S}^3$ formulated in Theorem~\ref{thm:discrete_Lawson}. This is a correspondence between two discrete isothermic surfaces. We show that it is an isometry in the following sense: it preserves the metric coefficients for isothermic nets introduced previously in \cite{BS}. Exactly as in the smooth case, this is a correspondence between nets with the same Lax matrices, and the immersion formulas also coincide with the smooth case. As necessary intermediary results, we show that commuting Lax pairs generate discrete CMC and minimal surfaces via an immersion formula as in the smooth case (Theorems \ref{thm:CMCR3} and \ref{thm:CMCS3}), and, conversely, that all discrete CMC and minimal surfaces are generated by Lax pairs (Theorem~\ref{thm:reconstruction}).

Another approach to discrete isothermic surfaces in spaceforms via conserved quantities 
and discrete line bundles has been proposed in \cite{BuHRS,BuHR}, encompassing constant mean 
curvature nets as a special case. A Calapso transformation is defined therein, 
which generalizes the Lawson correspondence between the relevant spaceforms and preserves 
geometric quantities. In contrast, our more pedestrian method focuses on the immersion formulas 
which are proven to be identical as in the smooth case, and provide a more explicit 
definition of the correspondence and its metric invariance. Still, both definitions agree,
as we prove in Remark~\ref{Calapso}.
\medskip

We shall consider in this paper only meshes with quadrilateral planar faces,
known as quad-nets or Q-nets for short (also called PQ-meshes), whose
theoretical properties mimic those of their smooth counterparts. In the
particular case of nets indexed by $\mathbb{Z}^2$, indices play the same role
as coordinates of an immersion, and specific choices of Q-nets correspond to
specific parametrizations of surfaces.

Throughout the text, we will use the shift notation to describe the local
geometry: When $F$ is a net, $F = F_0 = F (0, 0)$ will denote a base point,
while $F_1, F_2, F_{12}$ will stand for $F (1, 0), F (0, 1), F (1, 1)$, so
that indices $1, 2$ correspond to shift in the first and second variables
respectively. The same holds for any vertex-based function. The edges of the
face $(F, F_1, F_{12}, F_2)$ are labeled $(0, 1)$, $(1, 12)$, $(12, 2)$ and
$(2, 0)$ and the values of an edge-based function $u$ will be denoted by
$u_{01}$, $u_{1, 12}$, etc. If $(i, j)$ is a pair of indices corresponding to
an edge, $\mathd \varphi_{i j}$ is by definition $\varphi_j - \varphi_i$.

\medskip

\emph{Acknowledgments}: the Authors wish to thank Udo Hertrich-Jeromin and 
Wayne Rossman for the fruitful discussions on this topic, and Tim Hoffmann for
his judicious remarks.


\section{The smooth theory}	\label{sec:smooth-theory}

\subsection{Constant mean curvature surfaces in $\mathbb{R}^3$}

We recall here a well known (see for example \cite{B, BP} for more details) description of CMC surfaces in $\mathbb{R}^3$ in terms of loop groups and quaternionic frames. The normalizations used in the present paper coincide with the normalizations in \cite{BP}.

In the sequel we identify the Euclidean three space $\mathbb{R}^3$ with imaginary quaternions ${\rm Im}\ {\mathbb{H}}$, and the standard imaginary quaternions with an orthonormal basis of $\mathbb{R}^3$,
and use the following matrix representation: 
\begin{eqnarray} 
\i=\left( \begin {array}{cc} 0&-i\\-i&0 \end{array}\right),
\j=\left( \begin {array}{cc} 0&{-1}\\1&0 \end{array}\right),
\k=\left( \begin {array}{cc} -i&0\\0&i \end{array}\right),
\1 =\left( \begin {array}{cc} 1&0\\0&1 \end{array}\right).     \label{AC1.3}
 \end{eqnarray}
This results in the following matrix representation of vectors in $\mathbb{R}^3$:
\begin{eqnarray}
X = (X_1,X_2,X_3) \ \longleftrightarrow \ \left(
\begin{array}{cc}
-i X_3 & -i X_1 -X_2 \\ 
-i X_1 +X_2 & i X_3 \end{array} \right) \cdotp                        \label{AC1.5}
\end{eqnarray}

Umbilic free CMC-surfaces are isothermic (conformal curvature line parametrization). Let $(x,y)\mapsto F(x,y)$ be a CMC isothermically parametrized surface. Without loss of generality, one can normalize the mean curvature $H=1$ and the Hopf differential $Q=\langle F_{xx}-F_{yy}+2iF_{xy},N\rangle=\frac{1}{2}$. Let $e^u$ be the corresponding conformal metric: $\langle dF,dF \rangle = e^u (dx^2+dy^2)$. It satisfies the elliptic sinh-Gordon equation
\begin{equation} \label{eq:sinh-Gordon}
u_{xx}+u_{yy}+\sinh u=0.
\end{equation}
The quaternionic frame $\Phi$ is defined as a solution of the system
\begin{eqnarray}
&\Phi_x = U\Phi,\qquad 
\Phi_y= V\Phi,              \label{H1.6}
\end{eqnarray}
where
\begin{eqnarray}
&U=\displaystyle {1\over 2}\left( \begin {array}{cc}
-\displaystyle{\frac {i}{2}}u_y & -\lambda e^{-u/2}-\displaystyle{1\over \lambda}e^{u/2}\\
\lambda e^{u/2}+\displaystyle{1\over \lambda}e^{-u/2}  &  \displaystyle{\frac {i}{2}}u_y  
\end{array} \right),                \nonumber\\
&                                    \label{H1.7}\\  
&V=\displaystyle {1\over 2}\left( \begin {array}{cc}
\displaystyle{\frac {i}{2}}u_x & -i\lambda e^{-u/2}+\displaystyle{i\over \lambda}e^{u/2}\\
i\lambda e^{u/2}-\displaystyle{i\over \lambda}e^{-u/2}  &  -\displaystyle{\frac {i}{2}}u_x    
\end{array}\right).                 \nonumber         
\end{eqnarray}
The matrices \eqref{H1.7} belong to the loop algebra
$$
g_H[\lambda ] =\{ \xi:S^1\to su(2): 
\xi (-\lambda)=\sigma_3 \xi (\lambda)\sigma_3 \} \, ,
\text{ where }
\sigma_3 = i \k = \begin{pmatrix} 1 & 0 \\ 0 & -1 \end{pmatrix}, 
$$
and $\Phi$ in \eqref{H1.6} lies in the corresponding loop group
\begin{equation}
G_H[\lambda ] =\{ \phi:S^1\to SU(2): 
\phi (-\lambda)=\sigma_3 \phi (\lambda)\sigma_3 \}.     \label{H1.8}
\end{equation}
Here $S^1$ is the set $| \lambda |=1$.

The system \eqref{H1.7} is the Lax representation for \eqref{eq:sinh-Gordon}, where the parameter $\lambda$ is called the spectral parameter.

\begin{theorem} \label{th:smooth_R^3}
The formulas
\begin{equation}   \label{eq:smooth_immersion_R3}  
	\quad \hat{N} =-\check{N}= - \Phi^{- 1} \k \Phi, \quad 
	\left\{ \begin{array}{ll}
	\hat{F} & = - \Phi^{- 1}  \frac{\partial \Phi}{\partial \gamma} - \frac{1}{2}  \hat{N}
   \\
   \check{F} & = - \Phi^{- 1} \frac{\partial \Phi}{\partial \gamma} +
   \frac{1}{2}  \hat{N} = \hat{F} + \hat{N} 
   \end{array} \right.,
\end{equation}
where $\lambda=e^{i\gamma}$, describe two parallel surfaces $\hat{F}, \check{F}$ with constant 
mean curvature $H=1$ and their Gauss maps $\hat{N}, \check{N}$.
Variation of $\gamma$ is an isometry, and the corresponding one parameter family of CMC surfaces is called the {\em associated family}. For $\gamma=0$, i.e. $\lambda=1$, the parametrizations of $\hat{F}$ and  $\check{F}$ are isothermic.
\end{theorem}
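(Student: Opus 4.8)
The plan is to work throughout in the matrix model \eqref{AC1.5}, identifying $\mathbb{R}^3$ with $\operatorname{Im}\mathbb{H}$, and to exploit that conjugation $X\mapsto\Phi^{-1}X\Phi$ by $\Phi\in SU(2)$ is a linear isometry of $\operatorname{Im}\mathbb{H}$. This reduces every geometric quantity to a $\Phi$-free matrix depending only on $u$ and $\lambda$ (a ``body frame''); since the determinant of the representing matrix in \eqref{AC1.5} equals the squared Euclidean length, lengths and angles can then be read off directly. I write $\lambda=e^{i\gamma}$, so that $\partial_\gamma=i\lambda\,\partial_\lambda$.

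First I would compute the tangent vectors by the Sym--Bobenko differentiation trick. With $\Psi=\partial_\gamma\Phi$, using $\partial_x\Phi^{-1}=-\Phi^{-1}U$ and $\partial_x\partial_\gamma\Phi=(\partial_\gamma U)\Phi+U\Psi$, the $U\Psi$ terms cancel and one gets $\partial_x(-\Phi^{-1}\Psi)=-\Phi^{-1}(\partial_\gamma U)\Phi$, while $\hat N_x=-\Phi^{-1}[\k,U]\Phi$; hence
\[
\hat F_x=\Phi^{-1}\bigl(-\partial_\gamma U+\tfrac12[\k,U]\bigr)\Phi,\qquad
\hat F_y=\Phi^{-1}\bigl(-\partial_\gamma V+\tfrac12[\k,V]\bigr)\Phi .
\]
The crux is that, on inserting \eqref{H1.7} and $\partial_\gamma=i\lambda\,\partial_\lambda$, the diagonal ($u_x,u_y$) parts cancel and both brackets collapse to the purely off-diagonal (hence imaginary-quaternion) matrices $i\,e^{-u/2}\left(\begin{smallmatrix}0&\lambda\\ 1/\lambda&0\end{smallmatrix}\right)$ and $e^{-u/2}\left(\begin{smallmatrix}0&-\lambda\\ 1/\lambda&0\end{smallmatrix}\right)$. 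Thus $\hat F_x,\hat F_y$ are genuine tangent vectors, and reading off \eqref{AC1.5} gives $\langle\hat F_x,\hat F_x\rangle=\langle\hat F_y,\hat F_y\rangle=e^{-u}$ and $\langle\hat F_x,\hat F_y\rangle=0$, independently of $\gamma$. This single computation delivers at once conformality and the assertion that varying $\gamma$ is an isometry, so the associated family is well defined.

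Next I would identify the Gauss map and the mean curvature. Since $\k\leftrightarrow(0,0,1)$ under \eqref{AC1.5}, the unit imaginary quaternion $\hat N$ is orthogonal (in the body frame) to the plane spanned by $\hat F_x,\hat F_y$, hence a unit normal, and the same reading shows $\check N=-\hat N$. For the mean curvature I would compute the second fundamental form from $\hat N_x=-\Phi^{-1}[\k,U]\Phi$, $\hat N_y=-\Phi^{-1}[\k,V]\Phi$, obtaining $e=-\langle\hat F_x,\hat N_x\rangle$, $g=-\langle\hat F_y,\hat N_y\rangle$, $f=0$; the Weingarten formula $H=(eG+gE-2fF)/2(EG-F^2)$ then simplifies through the identity $\cosh\tfrac u2-\sinh\tfrac u2=e^{-u/2}$ to exactly $H=1$. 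For the parallel surface $\check F=\hat F+\hat N$ one repeats the computation (its tangent vectors are $\hat F_x+\hat N_x$ and $\hat F_y+\hat N_y$), finding the reciprocal conformal factor $e^{u}$, Gauss map $\check N=-\hat N$, and again $H=1$ --- the expected behaviour of the parallel CMC surface at distance $1$.

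Finally, for isothermicity at $\gamma=0$ I would observe that at $\lambda=1$ the above off-diagonal matrices align with the coordinate axes: $\hat F_x$ and $\hat N_x$ both point along the first basis vector, and $\hat F_y,\hat N_y$ along the second. Hence the shape operator is diagonal in the $(x,y)$-basis, so the coordinate lines are curvature lines, and together with conformality this is precisely isothermicity, for both $\hat F$ and $\check F$. I expect the main obstacle to be the bookkeeping in the crux step: checking that the diagonal contributions to $-\partial_\gamma U+\tfrac12[\k,U]$ (and its $V$-analogue) cancel and that the surviving entries have $\gamma$-independent modulus, since this is what simultaneously yields tangency, conformality, and the associated-family isometry. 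As an internal consistency check one can match the extrinsic Gauss curvature $\kappa_1\kappa_2$ against the Gauss equation for the metric $e^{-u}$, which pins down the normalization underlying \eqref{eq:sinh-Gordon}.
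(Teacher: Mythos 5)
The paper itself offers no proof of Theorem~\ref{th:smooth_R^3}: it is recalled as well-known background, with the details delegated to \cite{B,BP}, so there is no in-paper argument to compare your route against. What you propose is the standard Sym--Bobenko computation, and most of it checks out: the cancellation giving $\partial_x(-\Phi^{-1}\partial_\gamma\Phi)=-\Phi^{-1}(\partial_\gamma U)\Phi$, the collapse of $-\partial_\gamma U+\tfrac12[\k,U]$ to $i e^{-u/2}\begin{pmatrix}0&\lambda\\ \lambda^{-1}&0\end{pmatrix}$ and of $-\partial_\gamma V+\tfrac12[\k,V]$ to $e^{-u/2}\begin{pmatrix}0&-\lambda\\ \lambda^{-1}&0\end{pmatrix}$, the resulting $\gamma$-independent conformal factors $e^{-u}$ for $\hat F$ and $e^{u}$ for $\check F$ (which indeed deliver conformality and the associated-family isometry in one stroke), and the normality of $\hat N=-\Phi^{-1}\k\Phi$ are all correct.

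There is, however, one genuine error: the claim $f=0$. In your body frame one finds $\hat F_x\leftrightarrow e^{-u/2}(-\cos\gamma,\sin\gamma,0)$ and $\hat N_y\leftrightarrow\bigl(-2\sin\gamma\cosh\tfrac u2,\;2\cos\gamma\sinh\tfrac u2,\;0\bigr)$, so that $\langle\hat F_x,\hat N_y\rangle=\sin 2\gamma$ and hence $f=-\sin 2\gamma$, which vanishes only for $\gamma\equiv 0\pmod{\pi/2}$. Your mean-curvature computation must hold for \emph{general} $\gamma$ (every member of the associated family is asserted to be CMC), and taken at face value there $f=0$ would make every parametrization in the family a curvature-line, hence isothermic, parametrization --- contradicting both the last sentence of the theorem and your own closing paragraph, where you correctly observe that the alignment of $\hat F_x,\hat N_x$ with the coordinate axes happens only at $\lambda=1$. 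The error is luckily immaterial for $H$: since the metric cross term $F=\langle\hat F_x,\hat F_y\rangle$ vanishes, $f$ drops out of $H=(eG+gE-2fF)/(2(EG-F^2))$, and $e+g=2e^{-u}$ (your identity $\cosh\tfrac u2-\sinh\tfrac u2=e^{-u/2}$) still yields $H=1$ for all $\gamma$, with the analogous computation $\check e+\check g=2e^{u}$ for $\check F$. The repair is simply to compute $f$ honestly: $f=-\sin 2\gamma$ leaves the curvature computation untouched and simultaneously upgrades your final isothermicity remark from an observation into a proof --- the coordinate lines are curvature lines precisely when $\gamma\equiv 0\pmod{\pi/2}$, in particular at $\gamma=0$ as the theorem asserts.
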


\subsection{Constant mean curvature and minimal surfaces in $\mathbb{S}^3$}

The same Lax pair yields a CMC net in $\mathbb{S}^3$ through the immersion formula obtained in \cite{B}.
We identify $S^3$ with unitary quaternions
\begin{eqnarray}
X = (X_1,X_2,X_3,X_4) \ \longleftrightarrow \ \left(
\begin{array}{cc}
-i X_3 +X_4 & -i X_1 -X_2 \\ 
-i X_1 +X_2 & i X_3+X_4 \end{array} \right) \cdotp                        \label{eq:quaternionic_S^3}
\end{eqnarray} 
If we gauge the frame into 
$\Psi = \begin{pmatrix}  e^{i \gamma / 2} & 0 \\ 0 & e^{- i \gamma / 2} 
\end{pmatrix} \Phi = \exp \left( - \frac{\gamma}{2} \k \right) \Phi$, 
then for any pair $\lambda_1 = e^{i \gamma_1}, \lambda_2 = e^{i
\gamma_2}$ in the unit circle,
\begin{equation*}  
F = \Psi (\lambda_1)^{- 1} \Psi (\lambda_2), \quad 
N = - \Psi (\lambda_1)^{- 1} \k \Psi (\lambda_2) 
\end{equation*}
are an orthogonal pair of vectors in $\mathbb{S}^3$. They describe a surface $F$ with constant mean curvature $H=\cot(\gamma_1-\gamma_2)$ and its Gauss map $N$.
In terms of the original frame the formulas look as follows:  
\begin{equation}   \label{eq:smooth_S^3}
	F = \Phi (\lambda_1)^{- 1} M \Phi (\lambda_2), \; 
	N = - \Phi (\lambda_1)^{- 1} \k M \Phi (\lambda_2),
\end{equation}
where $M = \exp (\frac{\gamma_1 - \gamma_2}{2} \k)$.

\begin{theorem} \label{th:smooth_S^3}
Let $\Phi(\lambda)$ be a solution of \eqref{H1.6}. Formulas \eqref{eq:smooth_S^3} describe a surface $F$ with constant mean curvature $H=\cot(\gamma_1-\gamma_2)$ and its Gauss map $N$. The parametrization $F(x,y)$ is isothermic if and only if $\lambda_2 = \pm \lambda_1^{-1}$ ($\gamma_1 + \gamma_2 \equiv 0 \mod \pi$). 
In particular, for $\gamma_1=-\gamma_2=\frac{\pi}{4}$ one obtains an isothermically parametrized minimal surface $F$ with the Gauss map $N$ and the conformal metric $e^{-u}$. Equivalently they can be treated as an isothermically parametrized minimal surface $N$ with the Gauss map $F$ and the conformal metric $e^{u}$.
\end{theorem}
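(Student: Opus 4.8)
The plan is to verify the immersion formulas \eqref{eq:smooth_S^3} directly by computing first derivatives of $F$ and $N$ and reading off the induced metric, second fundamental form, and mean curvature. First I would substitute $\Psi = \exp(-\tfrac{\gamma}{2}\k)\Phi$ into the product $F=\Psi(\lambda_1)^{-1}\Psi(\lambda_2)$, noting that $\Phi(\lambda_j)$ solves \eqref{H1.6} with $U,V$ evaluated at $\lambda=\lambda_j$, so that $\Phi(\lambda_j)_x = U(\lambda_j)\Phi(\lambda_j)$ and similarly for $y$. Differentiating $F$ then yields $F_x = \Phi(\lambda_1)^{-1}\bigl(M U(\lambda_2) - U(\lambda_1) M\bigr)\Phi(\lambda_2)$ and an analogous expression in $y$, where $M=\exp(\tfrac{\gamma_1-\gamma_2}{2}\k)$ records the gauge discrepancy between the two spectral parameters. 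Because $U,V$ depend on $\lambda$ only through $\lambda e^{\pm u/2}+\lambda^{-1}e^{\mp u/2}$, the combination $MU(\lambda_2)-U(\lambda_1)M$ simplifies neatly, and I expect it to be proportional to a fixed imaginary-quaternion direction times $e^{\pm u/2}$ factors.

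Next I would compute $\langle F_x,F_x\rangle$, $\langle F_x,F_y\rangle$, $\langle F_y,F_y\rangle$ using the quaternionic inner product (via the identification \eqref{eq:quaternionic_S^3} and the fact that $\langle A,B\rangle=-\tfrac12\Re\operatorname{tr}(AB)$ for imaginary quaternions). The key structural point is that $F$ and $N$ live in the tangent space of $\mathbb{S}^3$ at the relevant point, and orthonormality of $\{F,N,F_x,F_y\}$ must be checked; I would confirm $\langle F,N\rangle=0$ and $\langle F,F\rangle=\langle N,N\rangle=1$ directly from the unitarity of $\Phi$ and the relation $\k M=M^{-1}\k$ (since $\k$ anticommutes with the off-diagonal generators but $M$ is a $\k$-exponential, so actually $\k M = M\k$ — I would check the correct commutation carefully here). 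Computing the mean curvature then amounts to expressing $\langle N_x,F_x\rangle$ etc. and forming $H=\tfrac12\operatorname{tr}(\mathrm{II}\cdot\mathrm{I}^{-1})$; the sinh-Gordon equation \eqref{eq:sinh-Gordon} should be exactly what guarantees $H$ is the constant $\cot(\gamma_1-\gamma_2)$.

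For the isothermic characterization, I would examine when the induced first fundamental form is conformal, i.e. $\langle F_x,F_x\rangle=\langle F_y,F_y\rangle$ and $\langle F_x,F_y\rangle=0$. The off-diagonal and diagonal metric coefficients will carry factors involving $\lambda_1\lambda_2$ and $\lambda_1/\lambda_2$; conformality forces the cross term to vanish and the diagonal terms to agree, which I expect to reduce precisely to the condition $\lambda_2=\pm\lambda_1^{-1}$, equivalently $\gamma_1+\gamma_2\equiv0\bmod\pi$. Specializing to $\gamma_1=-\gamma_2=\tfrac{\pi}{4}$ gives $\gamma_1-\gamma_2=\tfrac{\pi}{2}$, so $H=\cot(\tfrac{\pi}{2})=0$ and $M=\exp(\tfrac{\pi}{4}\k)$, yielding a minimal surface; computing the metric explicitly in this case should produce $e^{-u}(dx^2+dy^2)$, and the symmetric roles of $F$ and $N$ (swapping them exchanges the factor $e^{-u}\leftrightarrow e^{u}$, as $N$ is obtained from $F$ by left-multiplication by $-\k$) give the dual minimal immersion with metric $e^{u}$.

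The main obstacle I anticipate is bookkeeping the gauge factor $M$ and the $\lambda$-dependence consistently: the two frames $\Phi(\lambda_1)$ and $\Phi(\lambda_2)$ satisfy \emph{different} Lax systems, so the derivative commutators do not collapse as cleanly as in the single-$\lambda$ Euclidean case of Theorem~\ref{th:smooth_R^3}. Getting the conformal factor to come out as $e^{-u}$ (rather than some $\lambda$-dependent rescaling) requires carefully tracking the $e^{\pm u/2}$ weights through $M U(\lambda_2)-U(\lambda_1)M$, and it is precisely here that the specific normalizations $H=1$, $Q=\tfrac12$ from the Euclidean setup feed in. I would treat this as the computational heart of the proof and organize it so that the sinh-Gordon equation enters only at the final mean-curvature computation.
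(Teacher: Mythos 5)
The paper itself states Theorem~\ref{th:smooth_S^3} without proof (it is recalled from the smooth theory, with the immersion formula cited from [B]), so your proposal must stand on its own. Its computational skeleton is largely sound: $F_x=\Phi(\lambda_1)^{-1}\bigl(MU(\lambda_2)-U(\lambda_1)M\bigr)\Phi(\lambda_2)$ is the correct starting point, the orthogonality $\langle F,N\rangle=0$ and $\|F\|=\|N\|=1$ do follow from unitarity and $\k M=M\k$, and in the minimal case the metrics of $F$ and $N$ do come out as $e^{-u}$ and $e^{u}$.

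However, the step you rely on for the central claim --- characterizing isothermic parametrizations by conformality of the induced metric --- fails. Carry out your own computation: since $M$ is diagonal, the diagonal ($u_y$) parts of $MU(\lambda_2)-U(\lambda_1)M$ cancel, and with $\lambda_j=e^{i\gamma_j}$ the $e^{u/2}$-terms cancel as well, leaving
\[
MU(\lambda_2)-U(\lambda_1)M=-i\,e^{-u/2}\sin(\gamma_2-\gamma_1)\begin{pmatrix}0&e^{i\delta}\\ e^{-i\delta}&0\end{pmatrix},
\qquad
MV(\lambda_2)-V(\lambda_1)M=e^{-u/2}\sin(\gamma_2-\gamma_1)\begin{pmatrix}0&e^{i\delta}\\ -e^{-i\delta}&0\end{pmatrix},
\]
with $\delta=\tfrac{\gamma_1+\gamma_2}{2}$. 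Hence $\langle F_x,F_x\rangle=\langle F_y,F_y\rangle=e^{-u}\sin^2(\gamma_2-\gamma_1)$ and $\langle F_x,F_y\rangle=0$ for \emph{every} pair of unit spectral parameters: the first fundamental form is conformal identically in $(\lambda_1,\lambda_2)$. This is nothing but the associated-family isometry, the $\mathbb{S}^3$ analogue of the statement about varying $\gamma$ in Theorem~\ref{th:smooth_R^3}. So the condition you plan to impose is vacuous, and your argument would instead ``prove'' that every member of the family is isothermically parametrized, contradicting the theorem. Isothermic means conformal \emph{and} curvature-line; the constraint $\lambda_2=\pm\lambda_1^{-1}$ comes from diagonality of the second fundamental form. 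A computation of the same kind (using $\k M U(\lambda_2)-U(\lambda_1)\k M$, where now the $e^{u/2}$-terms survive) shows that $\langle N_x,F_y\rangle$ is proportional to $\sin(\gamma_2-\gamma_1)\sin(\gamma_1+\gamma_2)$, so the off-diagonal entry of the second fundamental form vanishes precisely when $\gamma_1+\gamma_2\equiv 0\bmod\pi$, i.e. $\lambda_2=\pm\lambda_1^{-1}$. The $\lambda$-dependence you hoped to find in the metric sits entirely in the second fundamental form; once you replace the conformality test by this curvature-line test, the rest of your plan (including $H=\cot(\gamma_1-\gamma_2)$ and the specialization $\gamma_1=-\gamma_2=\tfrac{\pi}{4}$) goes through.
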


\subsection{The Lawson correspondence}

As we have indicated already in Theorem~\ref{th:smooth_S^3}, CMC surfaces in $\mathbb{R}^3$ and minimal surfaces in $\mathbb{S}^3$ corresponding to the same Lax pair are isometric. This correspondence can be lifted to the frames without referring to the Lax representation. The corresponding formulas were obtained in \cite{OP}. We will derive them from the immersion formulas \eqref{th:smooth_R^3}, \eqref{eq:smooth_S^3}. 

\begin{theorem}\label{thm:Lawson_smooth}
Let $F$ and $N$ be a pair of Christoffel dual ($F^*=N$) isothermic surfaces in $\mathbb{S}^3$. They can be treated equivalently as the minimal surface $F$ with the Gauss map $N$ or the minimal surface $N$ with the Gauss map $F$. 
Then there exist surfaces $\hat{F}$ and $\check{F}$ in $\mathbb{R}^3$ with constant mean curvature $H=1$ isometric to $F$ and $N$ respectively. They and their Gauss maps $\hat{N}$ and $\check{N}$  are given by the following formulas:   
\begin{eqnarray}
d\hat{F}= F^{-1}*dF, \quad \hat{N}=F^{-1}N=-NF^{-1} \nonumber\\
d\check{F}=N^{-1}*dN, \quad \check{N}=-F^{-1}N=NF^{-1} \label{eq:Lawson_smooth}.
\end{eqnarray}
Here $*$ is the Hodge star defined by ($*f_x=-f_y, *f_y=f_x$).
The parametrization of surfaces $\hat{F}, \check{F}$ given by \eqref{eq:Lawson_smooth} inherited from the isothermic parametrization of $F$ and $N$ is not isothermic. Formulas \eqref{eq:Lawson_smooth} give the surfaces from the associated family \eqref{eq:smooth_immersion_R3} corresponding to $\lambda=\lambda_2= e^{-i\frac{\pi}{4}}$.

Surfaces $F,N$ in $\mathbb{S}^3$ and surfaces $\hat{F},\check{F}$ in $\mathbb{R}^3$ with the Gauss maps $\hat{N},\check{N}$ are described by the formulas \eqref{eq:smooth_S^3} and \eqref{eq:smooth_immersion_R3} with the same Lax matrices \eqref{H1.7}.
\end{theorem}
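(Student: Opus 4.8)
The plan is to derive the Lawson correspondence formulas \eqref{eq:Lawson_smooth} directly from the two immersion formulas \eqref{eq:smooth_immersion_R3} and \eqref{eq:smooth_S^3}, rather than verifying the CMC and isometry properties from scratch (those already follow from Theorems~\ref{th:smooth_R^3} and \ref{th:smooth_S^3}). First I would specialize the $\mathbb{S}^3$ immersion formula to the isothermic minimal case $\gamma_1=-\gamma_2=\frac{\pi}{4}$, so that $\lambda_1=e^{i\pi/4}$, $\lambda_2=e^{-i\pi/4}$ and $M=\exp(\frac{\pi}{4}\k)$, giving $F=\Phi(\lambda_1)^{-1}M\Phi(\lambda_2)$ and $N=-\Phi(\lambda_1)^{-1}\k M\Phi(\lambda_2)$. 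From these I immediately read off the algebraic relations $\hat{N}=F^{-1}N$ and $\check{N}=-F^{-1}N$: substituting the formulas gives $F^{-1}N = \Phi(\lambda_2)^{-1}M^{-1}\Phi(\lambda_1)\cdot(-\Phi(\lambda_1)^{-1}\k M\Phi(\lambda_2)) = -\Phi(\lambda_2)^{-1}M^{-1}\k M\Phi(\lambda_2)$, and since $M$ commutes appropriately with $\k$ (both built from $\k$), this collapses to $-\Phi(\lambda_2)^{-1}\k\Phi(\lambda_2)$, which is exactly $\hat{N}$ evaluated at $\lambda=\lambda_2=e^{-i\pi/4}$ by \eqref{eq:smooth_immersion_R3}. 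This both fixes the claimed value $\lambda=\lambda_2=e^{-i\pi/4}$ and verifies the Gauss-map relations; the anticommutation $F^{-1}N=-NF^{-1}$ follows from $N$ and $F$ being orthogonal quaternions.

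Next I would establish the differential formula $d\hat{F}=F^{-1}*dF$. The natural route is to differentiate the $\mathbb{R}^3$ immersion formula $\hat{F}=-\Phi^{-1}\partial_\gamma\Phi-\frac12\hat{N}$ along the surface parameters $x,y$ at the fixed value $\lambda=\lambda_2$, and independently differentiate $F=\Phi(\lambda_1)^{-1}M\Phi(\lambda_2)$, then match. Using the Lax equations \eqref{H1.6} $\Phi_x=U\Phi$, $\Phi_y=V\Phi$ one computes $dF$ in terms of $U,V$ evaluated at $\lambda_1,\lambda_2$, while $d\hat{F}$ requires the mixed derivative $\partial_\gamma\partial_x\Phi$, i.e. the $\lambda$-derivative of the Lax matrices. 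The key computational identity is that $F^{-1}dF = \Phi(\lambda_2)^{-1}\big(M^{-1}\Phi(\lambda_1)\,d(\Phi(\lambda_1)^{-1})\,M + d\log\Phi(\lambda_2)\big)\Phi(\lambda_2)$, which I expect to reorganize into the spectral-parameter derivative of the connection; the Hodge star appears because the combination $U\,dx+V\,dy$ versus its $\lambda$-derivative exchanges $dx\leftrightarrow dy$ with a sign at the specific phase $\gamma=-\pi/4$, reflecting the $e^{\pm i\gamma/2}$ gauge and the structure of \eqref{H1.7}. This is the step I expect to be the main obstacle: carefully tracking how the explicit $\lambda$-dependence in $U,V$ produces, after evaluation at $\lambda_2=e^{-i\pi/4}$, exactly the rotated one-form $*dF=-F_y\,dx+F_x\,dy$ conjugated by $F^{-1}$.

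The analogous computation with the roles of $F$ and $N$ interchanged yields $d\check{F}=N^{-1}*dN$ and $\check{N}=NF^{-1}$; by the symmetry of Theorem~\ref{th:smooth_S^3}, where $N$ is the minimal surface with Gauss map $F$ and conformal metric $e^{u}$, this is the same derivation with $\lambda_1\leftrightarrow\lambda_2$ and a sign change in $M$, so it need not be redone in full. Finally I would address the two remaining assertions. That $\hat{F},\check{F}$ are parallel CMC-$1$ surfaces isometric to $F,N$ is inherited from Theorems~\ref{th:smooth_R^3} and \ref{th:smooth_S^3} together with the isometry statement for the associated family, since we have identified our surfaces as the members at $\lambda=\lambda_2$. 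That the induced parametrization is \emph{not} isothermic follows because isothermicity in the associated family holds only at $\gamma=0$ (Theorem~\ref{th:smooth_R^3}), whereas here $\gamma=-\pi/4\neq0$; concretely the Hodge star in $d\hat{F}=F^{-1}*dF$ rotates the conformal coordinate frame by a quarter turn relative to the curvature lines, destroying the diagonal form of the first fundamental form. The last sentence, that all four surfaces arise from the same Lax matrices \eqref{H1.7}, is then immediate since every object in the proof was expressed through the single frame $\Phi(\lambda)$ solving \eqref{H1.6}.
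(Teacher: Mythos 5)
Your route is the same one the paper takes: specialize \eqref{eq:smooth_S^3} to $\gamma_1=-\gamma_2=\frac{\pi}{4}$, get the Gauss map algebraically as $F^{-1}N=\Phi(\lambda_2)^{-1}M^{-2}\Phi(\lambda_2)=-\Phi(\lambda_2)^{-1}\k\Phi(\lambda_2)=\hat N(\lambda_2)$, and then prove $d\hat{F}=F^{-1}*dF$ by differentiating both immersion formulas and comparing at $\lambda_2=e^{-i\pi/4}$; your expression for $F^{-1}dF$ (reading $d\log\Phi$ as $d\Phi\,\Phi^{-1}$) is precisely the paper's expansion, which after applying $*$ becomes $\Phi(\lambda_2)^{-1}\left((M^{-1}V(\lambda_1)M-V(\lambda_2))\,dx+(-M^{-1}U(\lambda_1)M+U(\lambda_2))\,dy\right)\Phi(\lambda_2)$. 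The problem is that you stop at announcing that this ``should'' reorganize into $d\hat F$ --- but in this theorem the proof \emph{is} that direct computation, so it cannot be deferred. The paper evaluates both sides explicitly: $d\hat F=\Phi^{-1}\left(-\partial_\gamma(U\,dx+V\,dy)+\tfrac12[\k,U\,dx+V\,dy]\right)\Phi$, which at $\lambda_2$ equals \eqref{eq:dF^}, and then matches term by term; e.g.\ using $M^{-1}\begin{pmatrix}a&b\\c&d\end{pmatrix}M=\begin{pmatrix}a&ib\\-ic&d\end{pmatrix}$ one finds $M^{-1}V(\lambda_1)M-V(\lambda_2)=e^{-u/2}\begin{pmatrix}0&e^{i\pi/4}\\-e^{-i\pi/4}&0\end{pmatrix}$, the $dx$-coefficient of \eqref{eq:dF^}. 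So your plan does close when executed, but as written it is an outline of the paper's argument rather than a proof of its key step.

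One step of your proposal is actually false: the claim that $F^{-1}N=-NF^{-1}$ ``follows from $N$ and $F$ being orthogonal quaternions.'' Orthogonality of unit quaternions does not imply anticommutation: writing $F=f_4+\vec f$, $N=n_4+\vec n$, one has $F^{-1}N+NF^{-1}=2\langle F,N\rangle\1+2(f_4\vec n-n_4\vec f)$, so given $\langle F,N\rangle=0$ anticommutation additionally forces $f_4=n_4=0$, i.e.\ both quaternions purely imaginary --- which points of $\mathbb{S}^3$ are not. Concretely, $F=\1$ and $N=\i$ are orthogonal, yet $F^{-1}N=NF^{-1}=\i$. What the immersion formulas actually give is $F^{-1}N=\hat N(\lambda_2)$ (your computation), whereas $NF^{-1}=\Phi(\lambda_1)^{-1}M^{-2}\Phi(\lambda_1)=\hat N(\lambda_1)$: these are Gauss maps of two \emph{different} members of the associated family, evaluated at $\lambda_2$ and $\lambda_1$ respectively, and relating them by a sign is a further nontrivial claim that your argument does not establish (the paper's own proof is also silent on it, proving only the $F^{-1}N$ identity). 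Finally, a smaller slip: the parametrization of $\hat F$ inherited via \eqref{eq:Lawson_smooth} stays \emph{conformal} --- $\|d\hat F\|=\|*dF\|=\|dF\|$, which is exactly the isometry claim --- so the first fundamental form remains diagonal; what destroys isothermicity is the \emph{second} fundamental form, the Hopf differential being rotated by $\lambda_2^{-2}=i$ so that coordinate lines are no longer curvature lines.
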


\begin{proof}
Follows from direct computation. In the minimal surface case \eqref{eq:smooth_S^3} becomes
\begin{eqnarray} 
F = \Phi (\lambda_1)^{- 1} M \Phi (\lambda_2), \; 
	N =  \Phi (\lambda_1)^{- 1} M^{-1}\Phi (\lambda_2),\;
	M=\left( \begin {array}{cc}
e^{-i\frac{\pi}{4}} & 0\\
0  &  e^{i\frac{\pi}{4}} 
\end{array} \right).
\end{eqnarray}

Formulas for the Gauss maps follow immediately:
$$
F^{-1}N=\Phi^{-1}(\lambda_2)M^{-2}\Phi(\lambda_2)=\hat{N}(\lambda_2). 
$$
Computations for $d\hat{F}$ and $d\check{F}$ are slightly more involved,
$$
d\hat{F}=-d(\Phi)^{-1}\frac{\partial\Phi}{\partial\gamma}-\Phi^{-1}\frac{\partial d\Phi}{\partial\gamma}=
\Phi^{-1}\left(
-\frac{\partial}{\partial\gamma}(Udx+Vdy)+\frac{1}{2}[ \k,Udx+Vdy]
\right)\Phi.
$$
Calculating at $\lambda_2=e^{-i\frac{\pi}{4}}$ we get
\begin{equation} \label{eq:dF^}
d\hat{F}=e^{-u/2}\Phi^{-1}(\lambda_2)\left(
\left( \begin {array}{cc}
0 & e^{i\frac{\pi}{4}}\\
-e^{-i\frac{\pi}{4}}  & 0 
\end{array} \right)dx+
\left( \begin {array}{cc}
0 & -e^{-i\frac{\pi}{4}}\\
e^{i\frac{\pi}{4}}  & 0 
\end{array} \right)dy
\right)\Phi(\lambda_2).
\end{equation}
On the other hand from the formulas for surfaces in $\mathbb{S}^3$ we obtain
$$
dF=\Phi^{-1}(\lambda_1)\left(
(U(\lambda_1)dx-V(\lambda_1)dy)M+M(U(\lambda_2)dx+M(\lambda_2)dy)
\right)\Phi(\lambda_2),
$$
which implies
\begin{eqnarray*}
&*dF=\Phi^{-1}(\lambda_1)\left(
(V(\lambda_1)M-MV(\lambda_2))dx+(-U(\lambda_1)M+MU(\lambda_2))dy
\right)\Phi(\lambda_2), \\
&F^{-1}*F=\Phi^{-1}(\lambda_2)\left(
(M^{-1}V(\lambda_1)M-V(\lambda_2))dx+(-M^{-1}U(\lambda_1)M+U(\lambda_2))dy
\right)\Phi(\lambda_2) 
\end{eqnarray*}
A direct computation shows that the last expression coincides with \eqref{eq:dF^}.
The identity for $d\check{F}$ follows in the same way.
\end{proof}


\section{Discrete CMC and minimal surfaces in $\mathbb{R}^3$ and~$\mathbb{S}^3$}
\label{sec:discrete_CMC_Q-nets}

We will now define the discrete analogs of constant mean curvature and minimal surfaces, 
following the Steiner formula approach of \cite{BPW} and \cite{BHL}.

\medskip
Let $(F, N)$ be a pair of edge-parallel
maps from $\mathbb{Z}^2$ to $\mathbb{R}^4$, with planar faces, where either
\begin{itemize}
  \item  $F$ lies in $\mathbb{R}^3$ and $N$ takes values in $\mathbb{S}^2$, or,
  
  \item $F$ and $N$ lie in $\mathbb{S}^3$ and $F \bot N$ at each vertex.
\end{itemize}
The map $N$ is treated as the Gauss map of $F$. Since $N$ is planar and
constrained to a sphere, its faces are circular, and so are those of $F$, by
parallelism. Note that circular implies planar.

The area $\mathcal{A} (f)$ of a planar face $f$ being a quadratic form in its coordinates, we
define the mixed area $\mathcal{A} (f, f')$ of two edge-parallel faces to be the polar
form applied to $f, f'$:
\[ \mathcal{A} (f, f') = \frac{1}{4}  (\mathcal{A} (f + f') -\mathcal{A} (f -
   f')) \]
where $\mathcal{A} (f) = \mathcal{A} (f, f)$. This allows us to write a Steiner formula for the
area of the parallel face $f + \varepsilon f'$ as
\[ 
\mathcal{A} (f + \varepsilon f') =\mathcal{A} (f) + 2 \varepsilon
\mathcal{A} (f, f') + \varepsilon^2 \mathcal{A} (f')\, . 
\]
Applying this formula to the mesh pair $(F, N)$ on the face $f$, we identify
the mean and Gaussian curvature by $\mathcal{A} (F (f) + \varepsilon N (f)) 
= (1 - 2 \varepsilon H_f + \varepsilon^2 K_f) \mathcal{A} (F (f))$,
so that
\[ H_f = - \frac{\mathcal{A} (F (f), N (f))}{\mathcal{A} (F (f))} \text{ and }
   K_f = \frac{\mathcal{A} (N (f))}{\mathcal{A} (F (f))} \;\cdot 
\]
\begin{definition}
  A circular Q-net $(F, N)$, with $F,N$ as above, is of \emph{constant mean curvature}
  $H \neq 0$ (CMC) if $H_f = H$ on all faces $f$. It is \emph{minimal} if $H_f$ vanishes
  identically.
\end{definition}

Such a net is automatically {\emph{Koenigs}} (see \cite{BS}), i.e. it possesses a Christoffel
dual $F^{\ast}$ such that (i) $F^{\ast}$ is edge-parallel to $F$ and (ii) $A
(F, F^{\ast})$ vanishes identically. Indeed, \ if $(F, N)$ has constant mean
curvature $H$ (resp. is minimal), then $F^{\ast} = F + \frac{1}{H} N$ (resp.
$F^{\ast} = N$) is the dual. 
Being Koenigs and circular is equivalent for $F$ to be {\emph{discrete
isothermic}}. Discrete isothermic nets were originally defined in \cite{BP_isothermic} as nets with factorisable cross ratios, i.e. the cross ratio $\text{cr} (F, F_1, F_{12}, F_2)$ is of the
form $A / B$, with $A$ depending 
on the first coordinate and $B$ on
the second. Such functions $A, B$ are called {\emph{edge labelings}} and are uniquely defined up to a common factor.

In \cite{BS, BS-Koenigs} a discrete analogue of conformal metric was introduced for discrete isothermic surfaces.
It was shown that Koenigs nets possess a function $s:{\mathbb Z}^2\to{\mathbb R}^+$ defined at vertices, 
called the {\emph{(discrete conformal) metric coefficient}}. Consider black and white sublattices of ${\mathbb Z}^2$ so that every elementary quad contains two vertices of each displaced diagonally.

The conformal factor $s$ is defined up to a so called black-white rescaling: $s\mapsto \lambda s$ at black points, and $s\mapsto \mu s$ at white points. 
In particular $s$ relates the net to its Christoffel dual: 
\begin{equation}\label{eq:dual_through_s}
 F^{\ast}_i - F^{\ast} = \frac{1}{s_i s} (F_i - F), \quad i=1,2.
\end{equation}
Moreover, for discrete isothermic nets the edge labeling\footnote{Edge labelings are unique up to global
multiplication of $A$ and $B$ by a constant. The choice mentioned here is canonical.}
is linked to the discrete conformal factor $s$ and the edge lengths as follows (see \cite{BS, BS-Koenigs}): 
\begin{equation}\label{eq:AB_labeling}
A= \frac{\| F_1-F\|^2}{s s_1}, \ B= \frac{\| F_2-F\|^2}{s s_2}.
\end{equation}

One can approximate smooth isothermic surfaces by discrete isothermic surfaces \cite{BuM}. Probably this is also the case with minimal and CMC surfaces, although this is not yet proven.
%


\section{Loop group description}

Here following \cite{BP} we present the loop group description of discrete CMC surfaces in ${\mathbb R}^3$. We will show also that discrete CMC surfaces in ${\mathbb S}^3$ are described by the same discrete Lax representation and the immersion formula (\ref{eq:smooth_S^3}) of the smooth case.

\subsection{Discretization in the loop group}

As in the smooth case, we consider a frame $\Phi : \mathbb{Z}^2 \to G_H[\lambda ]$.
The discrete Lax pair $\U (\lambda) = \Phi_1(\lambda) \Phi (\lambda)^{- 1}$, 
$\V (\lambda) = \Phi_2(\lambda) \Phi (\lambda)^{- 1}$
are maps from the edges into the loop group. 
By analogy with the smooth immersions $\U (\lambda)$, $\V (\lambda)$ are defined of the following form:
on each edge,
\begin{equation}   \label{eq:Lax-pair}
\begin{split}
  \U (\lambda) & = \frac{1}{\alpha (\lambda)}  \begin{pmatrix}
    a & - \lambda u - \lambda^{- 1} u^{- 1}\\
    \lambda u^{- 1} + \lambda^{- 1} u & \bar{a}
  \end{pmatrix},
\\
	\V (\lambda) &= \frac{1}{\beta
  (\lambda)}  \begin{pmatrix}
    b & - i \lambda v + i \lambda^{- 1} v^{- 1}\\
    i \lambda v^{- 1} - i \lambda^{- 1} v & \bar{b}
  \end{pmatrix},  \end{split}
\end{equation}
where complex valued $a, b$ and real valued $u, v$ do not depend on $\lambda$, $u, v$ are positive and
$\alpha (\lambda)$ and $\beta (\lambda)$ are real such that the determinants are equal to $1$:
\begin{equation}\label{eq:alpha2beta2}
  \alpha (\lambda)^2 = | a |^2 + \lambda^2 + \lambda^{- 2} + u^2 + u^{- 2},
  \quad \beta (\lambda)^2 = | b |^2 - \lambda^2 - \lambda^{- 2} + v^2 +
  v^{- 2}. 
\end{equation}
Furthermore, $\alpha$ and $\beta$ on the opposite edges coincide, i.e. they are edge labeling for the
first and second indices respectively.

\medskip	

We will now focus on a single quad $(F, F_1, F_{12}, F_2)$, and let
$\U, \V$ be the Lax matrices associated to the edges $(F, F_1)$
and $(F, F_2)$ respectively; for the sake of simplicity, we will mark with a prime
the corresponding quantities on the opposite edges $(F_2, F_{12})$ and $(F_1,
F_{12})$: $\U'$, $\V'$, $a'$, $u'$, etc. In particular $\Phi_{12}=\U' \Phi_2$, $\Phi_{12}=\V' \Phi_1$. Note that $\alpha' = \alpha$ and $\beta' = \beta$.

The Lax pair satisfies 
\begin{equation} \label{eq:discrete_Lax}
\V'(\lambda) \, \U(\lambda) = \U'(\lambda) \, \V(\lambda)
\end{equation}
on any quad and gives rise to a frame $\Phi (\lambda) : \mathbb{Z}^2
\rightarrow G_H[\lambda ]$.

This commutation property yields
\begin{equation}
  uu' = vv' \label{eq:uuprimevvprime}
\end{equation}
\begin{equation}
  b' a - ba' = i (u' v + uv' - u'^{- 1} v^{- 1} - u^{- 1} v'^{- 1})
\end{equation}
\begin{equation} \label{eq:commut1}
  \bar{b}u' - b' u = i (\bar{a}v' - a' v )
\end{equation}
\begin{equation} \label{eq:commut2}
  \bar{b}u'^{- 1} - b' u^{- 1} = i (a' v^{- 1} - \bar{a} v'^{- 1})
\end{equation}
As noticed in \cite[(4.23)]{BP}, equation \eqref{eq:uuprimevvprime} is equivalent
to the existence of a vertex function $w$ such that
\begin{equation}
  u = w w_1, \quad u' = w_2 w_{12}, \quad v = w w_2, \quad
  v' = w_1 w_{12} \label{eq:w}
\end{equation}

The function $w$ turns out to be essentially the discrete conformal metric $s$, as we will show in the next section.


\subsection{Discrete CMC nets in Euclidean three space}
\label{sec:discrete_CMC_R3}

Let $\Phi(\lambda)$ be a frame defined from commuting Lax pairs as above, and let 
$\lambda = e^{i \gamma} \in \mathbb{S}^1$ be a spectral parameter. 
We define two nets $\hat{F}, \check{F}$ and a unit Gauss map~$\hat{N}$ as follows:
\begin{equation}   \label{eq:discrete_immersion_R3}  
	\quad \hat{N} = - \Phi^{- 1} \k \Phi, \quad 
	\left\{ \begin{array}{ll}
	\hat{F} & = - \Phi^{- 1}  \frac{\partial \Phi}{\partial \gamma}_{| \gamma = 0
   } - \frac{1}{2}  \hat{N}
   \\
   \check{F} & = - \Phi^{- 1} \frac{\partial \Phi}{\partial \gamma}_{| \gamma = 0 } +
   \frac{1}{2}  \hat{N} = \hat{F} + \hat{N} 
   \end{array} \right.
\end{equation}
where all the matrices are evaluated at $\gamma = 0$ (i.e. $\lambda = 1$).

\begin{theorem}[\cite{BP}] \label{thm:CMCR3} \ \\
  The pair $(\hat{F}, \hat{N})$ given by~\eqref{eq:discrete_immersion_R3} 
  is a CMC net in $\mathbb{R}^3$ with $H=1$. Its Christoffel dual is
  $\check{F}$. 
  On any quad the discrete conformal metric $s$ is given by
  \begin{equation}
    ss_1 = - u^2 \text{ and } ss_2 = v^2, \label{eq:vertexmetricR3}
  \end{equation}
  and the cross ratio  $\text{cr} (\hat{F}, \hat{F}_1, \hat{F}_{12}, \hat{F}_2)$ is equal to $- \beta(1)^2 / \alpha(1)^2$.
  The edge lengths are equal
  \begin{align*} 
  \| \hat{F}_1 - \hat{F} \|^2 &= \frac{4 u^2}{\alpha(1)^2} 
  = - \frac{4 s s_1 }{| a |^2 + 2 - ss_1 - s^{- 1} s_1^{- 1}} \, ,
  \\
  \| \hat{F}_2 - \hat{F} \|^2 &= \frac{4 v^2}{\beta(1)^2} 
  = \frac{4 ss_2 }{| b |^2 - 2 + s s_2 + s^{- 1} s_2^{- 1}}
  \, \cdot 
  \end{align*}
\end{theorem}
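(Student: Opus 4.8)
The plan is to verify each assertion by direct computation from the immersion formula \eqref{eq:discrete_immersion_R3}, exploiting the explicit form \eqref{eq:Lax-pair} of the Lax matrices at $\lambda=1$. First I would establish that $(\hat F,\hat N)$ is a circular Q-net. By definition $\hat N=-\Phi^{-1}\k\Phi$ is $SU(2)$-conjugation of $\k$, so $\hat N$ is a unit imaginary quaternion, i.e. $\hat N\in\mathbb S^2$; and $\hat F$, being a linear combination of $\Phi^{-1}\partial_\gamma\Phi$ and $\hat N$, lies in $\mathop{\textup{Im}}\mathbb H=\mathbb R^3$ after checking the skew-Hermitian type. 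Planarity/circularity then follows from the edge-parallelism of $\hat F$ and $\hat N$ once I compute the edge vectors $\hat F_1-\hat F$ and $\hat N_1-\hat N$ explicitly.

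The technical core is to compute the edge vectors. Using $\Phi_1=\U\Phi$ I would write
\begin{equation*}
\hat N_1-\hat N = -\Phi_1^{-1}\k\Phi_1+\Phi^{-1}\k\Phi
= -\Phi^{-1}\bigl(\U^{-1}\k\U-\k\bigr)\Phi,
\end{equation*}
and similarly, differentiating $\Phi_1=\U\Phi$ in $\gamma$ and evaluating at $\gamma=0$,
\begin{equation*}
\hat F_1-\hat F = -\Phi^{-1}\Bigl(\U^{-1}\tfrac{\partial\U}{\partial\gamma}\Bigr)_{|\gamma=0}\Phi-\tfrac12(\hat N_1-\hat N).
\end{equation*}
The key is to substitute \eqref{eq:Lax-pair} and read off the matrix $\U^{-1}\k\U-\k$ and $\U^{-1}\partial_\gamma\U$ at $\lambda=1$. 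This yields the squared edge length $\|\hat F_1-\hat F\|^2=4u^2/\alpha(1)^2$ directly from $\alpha(1)^2=|a|^2+2+u^2+u^{-2}$, and the parallel structure makes the edge vectors proportional between $\hat F$ and $\hat N$, giving circularity.

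With the edge vectors in hand I would compute the cross ratio $\text{cr}(\hat F,\hat F_1,\hat F_{12},\hat F_2)$. Since for a circular net the cross ratio is real, it reduces to a ratio of edge-length products, and using $\|\hat F_2-\hat F\|^2=4v^2/\beta(1)^2$ together with the factorization in terms of $\alpha,\beta$ it should collapse to $-\beta(1)^2/\alpha(1)^2$; here I would invoke $uu'=vv'$ from \eqref{eq:uuprimevvprime} to match opposite edges. The metric coefficient $s$ is then identified via the Christoffel-dual relation \eqref{eq:dual_through_s}: since Theorem~\ref{thm:CMCR3} asserts $\check F=\hat F+\hat N$ is the dual of $\hat F$, comparing $\check F_i-\check F=\hat N_i-\hat N$ with $\frac{1}{s s_i}(\hat F_i-\hat F)$ and using the computed edge vectors forces $ss_1=-u^2$, $ss_2=v^2$. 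This is consistent with the vertex factorization $u=ww_1$, $v=ww_2$ of \eqref{eq:w}, confirming that $w$ is essentially $s$. The expressions for edge lengths in terms of $s$ then follow by substituting \eqref{eq:vertexmetricR3} into \eqref{eq:alpha2beta2}.

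The main obstacle I anticipate is the bookkeeping in the $\gamma$-derivative at $\lambda=1$: the quantity $\Phi^{-1}\partial_\gamma\Phi$ is not multiplicative under the shift, so $\hat F_1-\hat F$ acquires the extra term $\bigl(\U^{-1}\partial_\gamma\U\bigr)_{|\gamma=0}$ that must be carefully separated from the $\hat N$-contribution, and verifying that the imaginary-quaternion (i.e. $\mathop{\textup{Im}}\mathbb H$) structure is preserved requires tracking the off-diagonal versus diagonal parts of \eqref{eq:Lax-pair} under conjugation by $\k$. Checking that the Christoffel dual is exactly $\check F$ (not merely edge-parallel) amounts to verifying that the mixed area $\mathcal A(\hat F,\check F)$ vanishes on every face, which I would reduce to the relation $\mathcal A(\hat F,\hat N)=-H\,\mathcal A(\hat F)=-\mathcal A(\hat F)$ guaranteed by the $H=1$ computation via the Steiner formula.
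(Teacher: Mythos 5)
Your edge-vector computations are exactly the paper's (your two displayed formulas are \eqref{eq:def-U} in disguise), and they do yield parallelism, the lengths $4u^2/\alpha(1)^2$ and $4v^2/\beta(1)^2$, and the ratios $\mathd\check F_{01}=-u^{-2}\,\mathd\hat F_{01}$, $\mathd\check F_{02}=v^{-2}\,\mathd\hat F_{02}$. The genuine gap is in how you conclude. To prove that $\check F$ is the Christoffel dual you reduce $\mathcal{A}(\hat F,\check F)=0$ to $\mathcal{A}(\hat F,\hat N)=-H\,\mathcal{A}(\hat F)$ with $H=1$, ``guaranteed by the $H=1$ computation via the Steiner formula''. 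But no such computation appears anywhere in your proposal, and none can precede this step: by the Steiner-formula definition $H_f=-\mathcal{A}(\hat F(f),\hat N(f))/\mathcal{A}(\hat F(f))$, the relation you invoke \emph{is} the assertion $H=1$ that the theorem makes; as written, the argument is circular. Your determination of $s$ has the same inversion: you assume duality in order to read off $ss_1=-u^2$, $ss_2=v^2$ (note also the slip $\check F_i-\check F=\hat N_i-\hat N$; correctly $\mathd\check F_{0i}=\mathd\hat F_{0i}+\mathd\hat N_{0i}$). The paper argues in the opposite, non-circular order: the factorization \eqref{eq:w} --- which comes from the Lax commutation $uu'=vv'$ and is not a mere ``consistency check'' but precisely what makes the overdetermined system $ss_1=-u^2$, $ss_2=v^2$ on all edges solvable by a single vertex function --- yields $s=\pm w^2$ satisfying the dual relation \eqref{eq:dual_through_s} with the correct sign pattern (crossing in one direction, embedded in the other); by the Koenigs/Christoffel theory of \cite{BS} this \emph{is} duality, i.e. $\mathcal{A}(\hat F,\check F)=0$, and only then does $H=1$ follow, because the dual is $\hat F+\hat N$.

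There is a second loop in your order of deductions: you obtain planarity/circularity of the faces ``from edge-parallelism'', but parallelism alone never makes a quad planar --- it only transports planarity from one net to an edge-parallel one, and at that stage neither $\hat F$ nor $\hat N$ is known to have planar faces (four points of $\mathbb{S}^2$ are generically not coplanar). Yet your cross-ratio argument starts from ``for a circular net the cross ratio is real''; and even granting circularity, the ratio of edge lengths only gives $|\text{cr}\,|=\beta(1)^2/\alpha(1)^2$, the minus sign requiring control of the cyclic order of the vertices on their circle. Both difficulties are resolved at once by the route the paper takes (carried out explicitly in the spherical case, proof of Theorem~\ref{thm:CMCS3}): compute the quaternionic cross ratio directly from the Lax matrices using the commutation \eqref{eq:discrete_Lax}; the result $-\frac{\beta(1)^2}{\alpha(1)^2}\,\1$ is a real multiple of $\1$, which simultaneously proves that each face is concyclic (hence the net is circular) and fixes the sign.
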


\begin{proof}
	Note that \cite{BP} use a slightly different, albeit equivalent notation. 
	For the sake of completeness and compatibility with the spherical case, 
	we shall sketch the proof here using our notations. 
	Note: all $\lambda$-dependent quantities are evaluated at $\lambda = 1$, 
	and we shall not write the variable $\lambda$ for greater legibility,
	so $\alpha$ stands for $\alpha(1)$, etc.
	
  Evaluating the edge vectors one obtains
  \begin{align}  \label{eq:def-U}
    \hat{F}_1 - \hat{F}
    & = - \Phi^{- 1} \U^{- 1}  \left( \dot{\U} -
    \frac{1}{2}  [\k, \U] \right) \Phi,
    &
    \hat{F}_2 - \hat{F} & = - \Phi^{- 1} \V^{- 1}  \left(
    \dot{\V} - \frac{1}{2}  [\k, \V] \right)
    \Phi,
    \\
    \hat{N}_1 - \hat{N} & =  - \Phi^{- 1} \U^{- 1}  [\k, \U] \Phi,
    &
    \hat{N}_2 - \hat{N} & = - \Phi^{- 1} \V^{- 1}  [\k, \V] \Phi,
  \end{align}
  where $\dot{\U}$ the derivative at $\gamma = 0$, and similar formulas for $\check{F}$.
Because $\alpha$ and $\beta$ have an extremum at $\lambda = 1$, $\dot{\U} = \frac{u - u^{- 1}}{\alpha} \i$ and $\dot{\V} = - \frac{v + v^{- 1}}{\beta} \j$. Since $[\k, \U] = - \frac{2 (u + u^{- 1})}{\alpha} \i$ and $[\k, \V] = \frac{2 (v - v^{- 1})}{\beta} \j$,
%
parallelism between the nets $\hat{F}$, $\hat{N}$ (and thus $\check{F}$) is clear,
as are the lengths of the edges; we also derive the 
proportionality factors
  \[ \mathd \check{F}_{01} = - u^{- 2} \mathd \hat{F}_{01}, \quad
     \mathd \check{F}_{02} = v^{- 2} \mathd \hat{F}_{02}, \]
  and using \eqref{eq:w}
  \[ \mathd \check{F}_{01} = - \frac{\mathd \hat{F}_{01}}{w^2 w_1^2},
     \quad \mathd \check{F}_{02} = \frac{\mathd \hat{F}_{02}}{w^2
     w_2^2} \cdot \]
  That proves that $\hat{F}$ is Koenigs with Christoffel dual $\check{F}$ and
  a discrete conformal metric given by $s = \pm w^2$ (see \cite{BS} for details). The sign can be chosen constant in one direction and alternates in other direction.

The claim about the cross ratios is proven by direct computation using the quaternionic formulas for cross ratios (see \cite{BP}).
\end{proof}

\medskip

As noted in the proof, the choice of $\lambda=1$ for the spectral parameter 
in~\eqref{eq:discrete_immersion_R3} is 
crucial (though not unique, any extremum of $\lambda^2 + \lambda^{-2}$ will do, i.e.
$\lambda \in \{ \pm 1, \pm i \}$). Other values of the spectral parameter, 
as in the smooth case (see Theorem~\ref{th:smooth_R^3}), will not satisfy the parallelism condition 
between the edges, nor the planarity of the faces, so that these nets are 
neither circular nor Koenigs.
Such nets 
retain interesting properties, in particular they are 
\emph{edge-constraint nets}, as defined in~\cite{HSFW}. Edge-constraint nets
are quadrilateral nets $F$ with non necessarily planar faces, and vertex normals $N$ such that, on any edge $e = (F F_i)$,
the average of the normals at the endpoints is orthogonal to the edge: 
$F_i - F \perp N+N_i$. 

Moreover, in this case the mean curvature can also be defined via a discrete version of Steiner's formula. For that purpose the area functional for non-planar quads is defined by projecting along the normal direction which is orthogonal to the diagonals of the quad. In was shown in \cite{HSFW} that all surfaces of the {\emph associated family} (i.e. those defined by (\ref{eq:discrete_immersion_R3}) with general unitary $\lambda$) possess constant mean curvature.


\subsection{Discrete CMC and minimal nets in the three sphere}

As in the case of smooth CMC surfaces, the same Lax pair leads yields a discrete CMC
surface in $\mathbb{S}^3$ through the immersion formula (\ref{eq:smooth_S^3}). 
We gauge again the frame into 
$\Psi = \begin{pmatrix}  e^{i \gamma / 2} & 0 \\ 0 & e^{- i \gamma / 2} 
\end{pmatrix} \Phi = \exp \left( - \frac{\gamma}{2} \k \right) \Phi$,
and define, for any pair $\lambda_1 = e^{i \gamma_1}, \lambda_2 = e^{i
\gamma_2}$ in the unit circle,
\begin{equation*}  
F = \Psi (\lambda_1)^{- 1} \Psi (\lambda_2), \quad 
N = - \Psi (\lambda_1)^{- 1} \k \Psi (\lambda_2) 
\end{equation*}
which are an orthogonal pair of vectors in $\mathbb{S}^3$. 
Equivalently, 
\begin{equation}  \label{eq:discrete_immersion_S3} 
	F = \Phi (\lambda_1)^{- 1} M \Phi (\lambda_2), \; 
	N = - \Phi (\lambda_1)^{- 1} \k M \Phi (\lambda_2),
\end{equation}
where $M = \begin{pmatrix} e^{i \frac{\gamma_2 - \gamma_1}{2}} & 0\\
0 & e^{- i \frac{\gamma_2 - \gamma_1}{2}} \end{pmatrix}
= \exp (\frac{\gamma_1 - \gamma_2}{2} \k)$.
\begin{theorem} \label{thm:CMCS3}
   The pair $(F, N)$ given by~\eqref{eq:discrete_immersion_S3}
  is a discrete isothermic CMC surface in $\mathbb{S}^3$ if, and only if, $\lambda_2 = \pm \lambda_1^{-1}$ ($\gamma_1 + \gamma_2 \equiv 0 \mod \pi$). Its mean curvature is equal to
  \[ H = \frac{\Re \lambda_1^2}{\Im \lambda_1^2 } = \cot (2
     \gamma_1) = \cot (\gamma_1 - \gamma_2) \, .
  \]
  Its Christoffel dual is $F^{\ast} = F + \frac{1}{H} N$ if $H\neq 0$, and 
  $F^{\ast} = N$ if $H = 0$. 
  On any quad the discrete conformal metric $s$ of $F$ is given by
  \begin{equation}
    \begin{array}{ll}
      ss_1 = - u^2  \sqrt{\frac{H^2}{1 + H^2}} \text{ and } ss_2 = v^2
      \sqrt{\frac{H^2}{1 + H^2}} & \text{ if } H \neq 0,\\
      ss_1 = - u^2 \text{ and } ss_2 = v^2 & \text{ if } H = 0,
    \end{array} \label{eq:vertex-metric-S3}
  \end{equation}
  and the cross ratio is equal to $- \beta^2 / \alpha^2$
  evaluated at $\lambda_1$. 
  The edge lengths satisfy, when $H \neq 0$,
  
  \begin{eqnarray} \label{eq:edge_lengths_H}
    \| \mathd F_{01} \|^2  =\frac{4 u^2 \sin^2  (2 \gamma_1)}{\alpha^2} =
    \frac{4 | ss_1 |}{\alpha^2 H \sqrt{1 + H^2}} 
    = \frac{4 | ss_1 |}{\left( |
    a |^2 + u^2 + u^{- 2} + \sqrt{\frac{H^2}{1 + H^2}} \right) H \sqrt{1 +
    H^2} },\nonumber\\
    \| \mathd F_{02} \|^2  = \frac{4 v^2 \sin^2  (2 \gamma_1)}{\beta^2} =
    \frac{4 | ss_2 |}{\beta^2 H \sqrt{1 + H^2}} 
    = \frac{4 | ss_2 |}{\left( | b
    |^2 + v^2 + v^{- 2} - \sqrt{\frac{H^2}{1 + H^2}} \right) H \sqrt{1 + H^2}},
  \end{eqnarray}
  and for minimal nets,
  \begin{equation} \label{eq:edge_lengths_0}
  \| \mathd F_{01} \|^2 = \frac{4 u^2}{\alpha^2} = - \frac{4 ss_1}{| a
     |^2 - ss_1 - s^{- 1} s_1^{- 1}}, \; \| \mathd F_{02} \|^2 =
     \frac{4 v^2}{\beta^2} = \frac{4 s s_2}{| a |^2 + ss_2 + s^{- 1} s_2^{-
     1}} \cdot 
    \end{equation}
\end{theorem}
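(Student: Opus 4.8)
The plan is to mirror the proof of Theorem~\ref{thm:CMCR3}, replacing the single-$\lambda$ derivative computation by the two-point formula \eqref{eq:discrete_immersion_S3}. The key simplification is that the map $X \mapsto \Phi(\lambda_1)^{-1} X \Phi(\lambda_2)$ is an isometry of $\mathbb{R}^4 \cong \mathbb{H}$, being a composition of left and right multiplication by unit quaternions; hence all metric data of the edge vectors (parallelism, edge lengths, mixed areas, cross ratios) reduces to the same data for the $\lambda$-independent \emph{inner matrices}. First I would use the shift relations $\Phi_1 = \U \Phi$, $\Phi_2 = \V \Phi$ to write
\begin{align*}
\mathd F_{01} &= \Phi(\lambda_1)^{-1}\bigl(\U(\lambda_1)^{-1} M \U(\lambda_2) - M\bigr)\Phi(\lambda_2), \\
\mathd N_{01} &= -\Phi(\lambda_1)^{-1}\bigl(\U(\lambda_1)^{-1}\k M \U(\lambda_2) - \k M\bigr)\Phi(\lambda_2),
\end{align*}
and similarly in the second direction with $\V$ and on the opposite (primed) edges.

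Next I would evaluate these inner matrices. Using $\U, \V \in SU(2)$ on $|\lambda| = 1$ (so $\U(\lambda)^{-1} = \U(\lambda)^{\dagger}$) together with the invariance $\alpha(\lambda^{-1}) = \alpha(\lambda)$, $\beta(\lambda^{-1}) = \beta(\lambda)$, the substitution $\lambda_2 = \lambda_1^{-1}$ makes $M = \mathrm{diag}(\lambda_1^{-1}, \lambda_1)$ and collapses each inner matrix into a single complex factor $\lambda_1^2 - \lambda_1^{-2} = 2i\sin(2\gamma_1)$ times an explicit quaternion. The central computation is to show that $\U(\lambda_1)^{-1}\k M\U(\lambda_1^{-1}) - \k M = c_1\bigl(\U(\lambda_1)^{-1}M\U(\lambda_1^{-1}) - M\bigr)$ with a \emph{real} scalar, explicitly $c_1 = -(\cos 2\gamma_1 + u^{-2})/\sin(2\gamma_1)$, and its analogue $c_2$ in the second direction; this establishes edge-parallelism $\mathd N_{0i} = c_i\,\mathd F_{0i}$. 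The case $\lambda_2 = -\lambda_1^{-1}$ is entirely analogous, and I would record that for any other $\lambda_2$ the ratio of the two inner matrices fails to be real, which gives the ``only if'' direction (no parallelism, hence no circularity), exactly as in the Euclidean remark following Theorem~\ref{thm:CMCR3}. Planarity of both faces then follows since edge-parallel nets have parallel face-planes, and circularity follows from $F, N \in \mathbb{S}^3$.

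From the explicit inner matrices the quantitative claims follow. The inner matrix for $\mathd F_{01}$ has squared quaternionic norm (its determinant) equal to $4u^2\sin^2(2\gamma_1)/\alpha^2$, and likewise $4v^2\sin^2(2\gamma_1)/\beta^2$ in the second direction; since the inner-matrix map is an isometry these are the squared edge lengths, giving \eqref{eq:edge_lengths_H} and \eqref{eq:edge_lengths_0} after substituting the metric. The cross ratio equals $-\beta^2/\alpha^2$ at $\lambda_1$ by the same quaternionic computation as in \cite{BP}, using that the cross ratio is conjugation invariant and real, so it coincides with that of the inner configuration. For the mean curvature I would compute the per-face ratio $H_f = -\mathcal{A}(F,N)/\mathcal{A}(F)$ directly from the four edge vectors; it simplifies to the face-independent value $\cot(2\gamma_1)$, whence $(F,N)$ is CMC, and $F^{\ast} = F + \frac{1}{H}N$ (resp.\ $F^{\ast}=N$ when $H=0$) is its Christoffel dual, being edge-parallel to $F$ with vanishing mixed area.

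Finally the discrete conformal metric is obtained from the dual relation \eqref{eq:dual_through_s}: since $\mathd F^{\ast}_{0i} = (1 + c_i/H)\,\mathd F_{0i}$, one has $1/(s s_i) = 1 + c_i/H$, and substituting $c_1, c_2$ and $H = \cot(2\gamma_1)$ yields $ss_1 = -u^2\cos(2\gamma_1)$, $ss_2 = v^2\cos(2\gamma_1)$, i.e.\ \eqref{eq:vertex-metric-S3} with $s = \pm w^2\,(H^2/(1+H^2))^{1/4}$ after using \eqref{eq:w} and $\sqrt{H^2/(1+H^2)} = |\cos(2\gamma_1)|$; the sign convention is fixed as in the Euclidean case. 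The minimal case $H=0$ ($\gamma_1 = \pi/4$) must be treated separately, since the dual degenerates to $F^{\ast} = N$ and $1/(ss_i) = c_i$ directly, giving $ss_1 = -u^2$, $ss_2 = v^2$. I expect the main obstacle to be the computation of $H_f$: the mixed area involves the full quad (both pairs of opposite edges), so it requires the commutation relations \eqref{eq:uuprimevvprime}--\eqref{eq:commut2} relating the primed and unprimed data, and this is the step that genuinely uses that $\Phi$ arises from a commuting Lax pair rather than from a single edge. The ``only if'' direction, by contrast, reduces to the observation that the two inner matrices are real-proportional precisely when $\lambda_2 = \pm\lambda_1^{-1}$.
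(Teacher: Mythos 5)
Your proposal follows the paper's proof in its core steps: the same inner-matrix formulas for $\mathd F_{0i}$ and $\mathd N_{0i}$, the same evaluation at $\lambda_2=\lambda_1^{-1}$ producing real proportionality factors (your $c_1$ matches the paper's $-(u^{-2}+\cos 2\gamma_1)/\sin(2\gamma_1)$, and the second direction gives $(v^{-2}-\cos 2\gamma_1)/\sin(2\gamma_1)$), the unitarity argument for the ``only if'' direction, determinant-based edge lengths, and the quaternionic cross-ratio computation using the commutation relations. The one genuinely different step is the mean curvature. You propose to evaluate $H_f=-\mathcal{A}(F(f),N(f))/\mathcal{A}(F(f))$ directly from the four edge vectors, which you rightly flag as the main obstacle --- and it is precisely the computation you never carry out. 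The paper never computes a mixed area at all: it forms $F^{\ast}=F+\frac{1}{H}N$, observes that its edge ratios $-u^{-2}/\cos(2\gamma_1)$ and $v^{-2}/\cos(2\gamma_1)$ factorize as $1/(s s_i)$ for a vertex function $s=\pm w^2\sqrt[4]{H^2/(1+H^2)}$ via \eqref{eq:w}, and then invokes the Koenigs theory of \cite{BS} (as in the proof of Theorem~\ref{thm:CMCR3}): an edge-parallel net whose ratios factorize in this way is a Christoffel dual, hence $\mathcal{A}(F,F^{\ast})=0$, i.e.\ $H_f=H$, on every face. Note that your own last step --- extracting $s$ from \eqref{eq:dual_through_s} using \eqref{eq:w} --- performs exactly this factorization, so in your plan the heavy mixed-area computation is redundant: once the factorization is established, duality and therefore constancy of $H_f$ come for free, and this is what the paper's ordering buys.

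Two smaller points need repair. First, planarity of the faces does not ``follow since edge-parallel nets have parallel face-planes'': at that stage neither $F$ nor $N$ is known to have planar faces, so the argument as stated is circular. In both your plan and the paper's proof, planarity and circularity are consequences of the cross ratio being a real multiple of $\1$ (four points in $\mathbb{H}$ with real quaternionic cross ratio are concircular), so the cross-ratio computation must be in place before any appeal to circularity, not after. Second, in the ``only if'' direction the case $\lambda_2=\pm\lambda_1$ satisfies your real-proportionality criterion vacuously, since the inner matrices vanish identically; it must be excluded separately because it yields constant maps, which is how the paper disposes of it after deducing $\Re\lambda_2^2=\Re\lambda_1^2$ from the equality of diagonals of the unitary matrices $\U(\lambda_1)M$ and $M\U(\lambda_2)$.
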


\begin{proof} is given by direct computation.
  Let us check that the edges of $F$ and $N$ are parallel.
  \begin{align*}
    F_1 - F 
    & = \Phi (\lambda_1)^{- 1} \U (\lambda_1)^{- 1}  (M\U
    (\lambda_2) -\U (\lambda_1) M) \Phi (\lambda_2),
    \\
    N_1 - N 
    & = - \Phi (\lambda_1)^{- 1} \U (\lambda_1)^{- 1} 
    (\k M\U (\lambda_2) -\U (\lambda_1) M\k) \Phi (\lambda_2).
  \end{align*}
  Note that, given two unitary matrices $U_1, U_2$ in $SU (2)$, $U_2 -
  U_1$ is a real multiple of $\k U_2 - U_1 \k$ iff their
  diagonals coincide. Applying it to $U_1 =\U (\lambda_1) M$, $U_2 =
  M\U (\lambda_2)$, we conclude in particular that $\alpha
  (\lambda_1) = \alpha (\lambda_2)$; so by \eqref{eq:alpha2beta2}, $\Re
  \lambda_2^2 = \Re \lambda_1^2$. 
The case $\lambda_2 = \pm  \lambda_1$ leads to constant maps. The case $\lambda_2 = \pm \lambda_1^{- 1}$ leads to non-trivial discrete surfaces. 
  Let us focus on $\lambda_2 = \lambda_1^{- 1}$, i.e. $\gamma_2 + \gamma_1 = 0$.
  We have then 
  \[ M\U (\lambda_2) -\U (\lambda_1) M = - \frac{2 u \sin (2
     \gamma_1)}{\alpha} \i 
     \text{ and }
      -\k M\U (\lambda_2) +\U (\lambda_1)
     M\k= \frac{2}{\alpha} (u^{- 1} + u \cos (2 \gamma_1))
     \i, 
     \]
  and similarly
  \[ M\V (\lambda_2) -\V (\lambda_1) M = \frac{2 v}{\beta}
     \sin (2 \gamma_1) \j 
  \text{ and }
  -\k M\V (\lambda_2) +\V (\lambda_1) M\k
  = \frac{2}{\beta}  (v^{- 1} - v \cos (2 \gamma_1)) \j 
  \]
  which proves the parallelism of the corresponding edges. If we set $F^{\ast} = F +
  \frac{1}{H} N$, with $H = \cot (2 \gamma_1) \neq 0$, then 
  \[ 
  \mathd F^{\ast}_{01} = \mathd F_{01} + \tan (2 \gamma_1) \mathd N_{01} =
     - \frac{u^{- 2}}{\cos (2 \gamma_1)} \mathd F_{01} = - u^{- 2} 
     \sqrt{\frac{1 + H^2}{H^2}} \mathd F_{01}, \]
  \[ \mathd F^{\ast}_{02} = \mathd F_{02} + \tan (2 \gamma_1) \mathd N_{02} =
     \frac{v^{- 2}}{\cos (2 \gamma_1)} \mathd F_{02} = v^{- 2}  \sqrt{\frac{1
     + H^2}{H^2}} \mathd F_{02}. \]
  We infer the existence of a discrete conformal metric $s$, as in the proof of Theorem
  \ref{thm:CMCR3}. Note that
  \[ 
  s = \pm w^2  \sqrt[4]{\frac{H^2}{1 + H^2}} \, \cdot 
  \]
  When $\gamma_1 = \pi / 4$, $H = 0$; we set $F^{\ast} = N$ and obtain
  \[ \mathd N_{01} = - u^{- 2} \mathd F_{01} \text{ and } \mathd N_{02} = v^{-2}
     \mathd F_{02} . \]
  For all values of $H$, this proves that $F^{\ast}$ is a Christoffel dual of $F$,
  and therefore $(F, N)$ has constant mean curvature $H$.
  
  To compute quaternionic cross ratio we assume that $\Phi(\lambda_1)=\1$ and use $\U' \V=\V' \U $:
  \begin{multline*} 
  \text{cr} (F, F_1, F_{12}, F_2) = (F - F_1)  (F_1 - F_{12})^{- 1} (F_{12} - F_2)  (F_2 - F)^{- 1} 
   \\
  \begin{split}
  	& = (\U^{} (\lambda_1)^{- 1} M\U (\lambda_2) - M) \U (\lambda_2)^{- 1}  
    (\V'^{} (\lambda_1)^{- 1} M \V' (\lambda_2) - M)^{- 1} 
    \\
    & = \left( - \frac{2 u \sin (2 \gamma_1)}{\alpha} \i \right) 
    \left( \frac{2 v'}{\beta} \sin (2 \gamma_1) \j \right)^{- 1} 
    \left( - \frac{2 u' \sin (2 \gamma_1)}{\alpha} \i \right) 
    \left( \frac{2 v}{\beta} \sin (2 \gamma_1) \j \right)^{- 1}
    \\
    & =  - \frac{\beta^2}{\alpha^2}  \frac{uu'}{vv'}  \1 = - \frac{\beta^2}{\alpha^2}  \1.
  \end{split}
  \end{multline*}

Formulas (\ref{eq:edge_lengths_H}, \ref{eq:edge_lengths_0}) follow directly from the quaternionic formulas for the corresponding edges derived above.
\end{proof}


\section{From  discrete minimal and CMC surfaces to the Lax pair}

We have seen that the (same) frame $\Phi$ integrating the Lax pair
in~\eqref{eq:Lax-pair}, gives rise to CMC or minimal quad-nets in $\mathbb{R}^3$
and $\mathbb{S}^3$. We shall now prove the converse.

\begin{theorem} \label{thm:reconstruction} 
  For any Q-net of constant mean curvature in $\mathbb{R}^3$ or $\mathbb{S}^3$,
  or minimal in $\mathbb{S}^3$, there exists a Lax pair satisfying
  \eqref{eq:Lax-pair}, such that the immersion formula  \eqref{eq:discrete_immersion_R3} 	
  or \eqref{eq:discrete_immersion_S3} holds.
\end{theorem}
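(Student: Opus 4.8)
The plan is to invert the constructions of Theorems~\ref{thm:CMCR3} and~\ref{thm:CMCS3}: from the geometric data of the net we read off the $\lambda$-independent quantities $u,v,a,b$, assemble the loop-group matrices $\U(\lambda),\V(\lambda)$ by the ansatz~\eqref{eq:Lax-pair}, verify that they commute on every quad, and finally integrate them to a frame whose immersion formula returns the original net.

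First I would exploit the fact that a CMC (or minimal) net is discrete isothermic. This furnishes the Christoffel dual, the edge labelings, and the discrete conformal metric $s$; setting $s=\pm w^2$ as in the proof of Theorem~\ref{thm:CMCR3} and using~\eqref{eq:w} produces a vertex function $w$ and hence the positive edge quantities $u=ww_1$, $v=ww_2$ (with the extra factor $\sqrt[4]{H^2/(1+H^2)}$ in the $\mathbb{S}^3$ case dictated by~\eqref{eq:vertex-metric-S3}). The moduli $|a|,|b|$ and the normalizations $\alpha(\lambda),\beta(\lambda)$ are then forced by the edge-length and cross-ratio formulas of the two theorems, while the reality of the cross ratio $-\beta^2/\alpha^2$ is guaranteed by the isothermic (circular plus factorizable) hypothesis.

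The phases of $a,b$ and the frame itself I would obtain by constructing the adapted (isothermic) frame at the geometric spectral value --- $\lambda=1$ in $\mathbb{R}^3$, $\lambda_1$ in $\mathbb{S}^3$, with $\gamma_1$ fixed by $H=\cot(2\gamma_1)$. Since the Gauss map satisfies $\hat N=-\Phi^{-1}\k\Phi$, the frame carries the fixed vector $\k$ to the normal, and the residual freedom (rotation about $\k$ at each vertex) is fixed by requiring the transition matrices $\U=\Phi_1\Phi^{-1}$, $\V=\Phi_2\Phi^{-1}$ to take the reduced shape of~\eqref{eq:Lax-pair} at the chosen $\lambda$ --- in which $\U(1)$ has imaginary part spanned by $\j,\k$ and $\V(1)$ by $\i,\k$; that such an isothermic frame exists is exactly the content of the curvature-line/conformal structure. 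Comparing with~\eqref{eq:Lax-pair} then reads off $a,b$, and since the frame is a genuine function on $\mathbb{Z}^2$ the two transitions automatically commute at the chosen $\lambda$.

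The decisive step, and the one I expect to be the main obstacle, is to promote this single-$\lambda$ commutation to the full zero-curvature identity~\eqref{eq:discrete_Lax}, equivalently to the four relations~\eqref{eq:uuprimevvprime}--\eqref{eq:commut2}, which the reconstructed $u,v,a,b$ must satisfy identically in $\lambda$. The relation $uu'=vv'$ is immediate from the existence of $w$; the remaining three, however, encode more than the closure of the frame at one spectral value, being equivalent to the edge-parallelism of $F$ and $N$ \emph{together with} the Christoffel/planarity conditions across the quad, and must be extracted from these. The cleanest route is to show that, for matrices of the special form~\eqref{eq:Lax-pair} with the already-matched diagonal and off-diagonal data, the Laurent coefficients of $\V'\U-\U'\V$ vanish as a consequence of the geometric constraints verified at the chosen $\lambda$ --- that is, that the rigid $\lambda$-dependence of the ansatz propagates compatibility throughout the associated family. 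Once~\eqref{eq:discrete_Lax} holds, the frame $\Phi(\lambda)$ integrates unambiguously, and a direct computation as in Theorems~\ref{thm:CMCR3} and~\ref{thm:CMCS3}, evaluating~\eqref{eq:discrete_immersion_R3} or~\eqref{eq:discrete_immersion_S3} at the relevant spectral values, recovers the original net in all three cases.
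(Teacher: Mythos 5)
Your overall architecture is the same as the paper's --- read off $u,v,\alpha,\beta,|a|,|b|$ from $s$, the labelings and the edge lengths, fix the phases of $a,b$ via an adapted frame at the geometric spectral value, then upgrade commutation of the transition matrices at that single value to the full identity \eqref{eq:discrete_Lax} --- and you correctly flag the last step as the crux. But that is exactly where your argument has a genuine gap: the mechanism you invoke (``the rigid $\lambda$-dependence of the ansatz propagates compatibility'') does not exist. Write the entries of $\alpha\beta\,\bigl(\V'(\lambda)\U(\lambda)-\U'(\lambda)\V(\lambda)\bigr)$ as Laurent polynomials: by the twisted loop-group parity, the diagonal entries have coefficients only at $\lambda^{-2},\lambda^{0},\lambda^{2}$ and the off-diagonal ones only at $\lambda^{-1},\lambda^{1}$. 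The $\lambda^{\pm2}$ coefficients are proportional to $uu'-vv'$, so they vanish by the Koenigs relation; evaluating at $\lambda=1$ then kills the $\lambda^{0}$ diagonal term, but for the off-diagonal part $c_{-1}\lambda^{-1}+c_{1}\lambda$ it yields only the single relation $c_{-1}+c_{1}=0$. Nothing in the ansatz forces $c_{-1}=c_{1}=0$: matrices of the form \eqref{eq:Lax-pair} with $uu'=vv'$ can commute at $\lambda=1$ without commuting identically. One more equation is needed, and it must come from geometry you have not used.

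This missing equation is precisely what the paper's Lemma~\ref{lemma:commutation-lambda} isolates: full commutation is equivalent to $uu'=vv'$ together with commutation at $\lambda=1$ \emph{and the derivative of the commutation relation at} $\lambda=1$ (equivalently, commutation at two values $\lambda,\lambda^{-1}$ with $\lambda\notin\{\pm1,\pm i\}$). The paper then supplies the extra condition from concrete geometric facts. In $\mathbb{R}^3$, where the two-point version is useless because the spectral value is $\lambda=1$ itself, the derivative relation $\dot\V'\U+\V'\dot\U=\dot\U'\V+\U'\dot\V$ is extracted by conjugating the closure identity $\mathd G_{01}+\mathd G_{1,12}-\mathd G_{02}-\mathd G_{2,12}=0$ of the well-defined quad $G=\hat F+\tfrac12\hat N=-\Phi^{-1}\frac{\partial\Phi}{\partial\gamma}_{|\gamma=0}$, using the zeroth-order commutation already established. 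In $\mathbb{S}^3$ the second spectral point is reached by running the whole construction on the symmetric net $F^{-1}$ with normal $N^{-1}$, which exchanges $\lambda_1\leftrightarrow\lambda_2=\lambda_1^{-1}$; the two-point form of the lemma then applies since $\lambda_1^4\neq1$ there. Neither ingredient appears in your proposal, so your passage from one spectral value to the whole loop is unproven. A secondary issue: your plan to fix the $U(1)$ phase at \emph{every} vertex so that \emph{all} transitions simultaneously take the reduced form is an overdetermined system (one phase per vertex against roughly one constraint per edge), and its solvability is essentially the compatibility you are trying to establish; the paper avoids this circularity by fixing a frame at a single vertex only, letting the four edges determine $\U,\V,\U',\V'$, and proving $\V'\U=\U'\V$ at $\lambda=1$ from the known value $-\beta^2/\alpha^2$ of the cross ratio.
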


We will prove this theorem in several steps. First we identify geometric quantities $s, A, B$ and algebraic quantities $u, v, \alpha, \beta$ appearing in the corresponding descriptions (\ref{eq:dual_through_s}, \ref{eq:AB_labeling}) and (\ref{eq:Lax-pair}, \ref{eq:alpha2beta2}). We will further denote the edges by $\mathd \hat{F}_{0i}=F_i-F$.

Let us start with the case of discrete CMC surfaces $\hat{F}$ with $H=1$. Its Christoffel dual is given by $\check{F}=\hat{F}^*=\hat{F}+\hat{N}$. The corresponding edges are related by (\ref{eq:dual_through_s}), where $s$ is the conformal metric coefficient of $\hat{F}$. We assume that $s$ alternates its sign along the first direction and does not change the sign along the second direction, i.e. $s_1 s<0,\ s_2 s>0$. Geometrically this means that the quads $(\hat{F},\hat{F}_1,\check{F}_1,\check{F})$ are crossing trapezoids, and the quads $(\hat{F},\hat{F}_2,\check{F}_2,\check{F})$ are embedded trapezoids. 

Comparing the analytic formulas of Theorem~\ref{thm:CMCR3} with the edge length formulas (\ref{eq:AB_labeling}) we obtain the following identification:
\begin{eqnarray} \label{eq:identification}
u^2=-s_1s,\quad v^2=s_2 s \nonumber \\
\alpha(1)=\frac{2}{\sqrt{-A}}, \quad \beta(1)=\frac{2}{\sqrt{B}}. 
\end{eqnarray}

Let us compute the angle between an edge $\mathd \hat{F}_{0i}$ of a discrete CMC surface and its unit normal $\hat{N}$. Since $\mathd \hat{F}_{01}$ and $\mathd
  \hat{F}^{\ast}_{01}$ are parallel (though in opposite directions) and the other
  sides have length 1, one derives 
\begin{equation} \label{eq:<dF,N>_1}
<\mathd \hat{F}_{01},\hat{N}> = \frac{1}{2}\left(\| \mathd \hat{F}_{01} \|+ \| \mathd \hat{F}^{\ast}_{01} \|\right)\| \mathd \hat{F}_{01} \|= \frac{1}{2}A(s_1s-1),
\end{equation}
and similarly,
\begin{equation} \label{eq:<dF,N>_2}
<\mathd \hat{F}_{02},\hat{N}> = \frac{1}{2}\left(\| \mathd \hat{F}_{02} \|- \| \mathd \hat{F}^{\ast}_{02} \|\right)\| \mathd \hat{F}_{02} \|= \frac{1}{2}B(s_2s-1).
\end{equation}
Here we have used (\ref{eq:dual_through_s},\ref{eq:AB_labeling}).

We will use the following technical lemma, which can be easily checked.
 \begin{lemma} \label{lemma:commutation-lambda}
    The commutation property (\ref{eq:discrete_Lax}) for all $\lambda$ is 
    equivalent to 
    the condition $u u' = v v'$ (equation \eqref{eq:uuprimevvprime}), 
    together with the commutation property for matrices and its derivative 
    evaluated \emph{only} at $\lambda = 1$:
    \[
    \V'(1) \U(1) = \U'(1) \V(1) \; \text{ and } \;
    \dot\V'(1) \U(1) + \V'(1) \dot\U(1) = \dot\U'(1) \V(1) + \U'(1) \dot\V(1)
    \] 
    It is also equivalent to $u u' = v v'$ together with the commutation
    property evaluated at two values $\lambda,\lambda^{-1}$, provided 
    $\lambda \notin {\pm 1, \pm i}$.
  \end{lemma}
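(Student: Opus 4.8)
The plan is to clear the scalar denominators and reduce the entire statement to the vanishing of a single matrix-valued Laurent polynomial. Writing $P(\lambda) = \alpha(\lambda)\U(\lambda)$ and $Q(\lambda) = \beta(\lambda)\V(\lambda)$, the entries of $P,Q$ are Laurent polynomials supported on $\lambda^{-1},\lambda^0,\lambda^1$, with constant diagonal and odd off-diagonal. Since $\alpha'=\alpha$ and $\beta'=\beta$, the commutation \eqref{eq:discrete_Lax} is equivalent, wherever $\alpha\beta\neq0$, to $M(\lambda):=Q'(\lambda)P(\lambda)-P'(\lambda)Q(\lambda)=0$, where $M=\alpha\beta(\V'\U-\U'\V)$ is a genuine Laurent polynomial; and a polynomial vanishing on a dense subset of the circle vanishes identically. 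The loop-group symmetry $\U(-\lambda)=\sigma_3\U(\lambda)\sigma_3$ (and likewise for $\V,\U',\V'$, using $\alpha(-\lambda)=\alpha(\lambda)$) gives $M(-\lambda)=\sigma_3 M(\lambda)\sigma_3$, so the diagonal of $M$ is even, supported on $\{\lambda^{\pm2},\lambda^0\}$, and the off-diagonal is odd, supported on $\{\lambda^{\pm1}\}$. Thus $M\equiv0$ amounts to killing the three coefficients of each diagonal entry and the two coefficients of each off-diagonal entry. A direct $2\times2$ computation of the leading products $Q'_+P_+-P'_+Q_+$ shows that the $\lambda^2$-coefficient of the diagonal of $M$ is a nonzero multiple of $uu'-vv'$, so condition \eqref{eq:uuprimevvprime} is exactly the vanishing of that top coefficient.

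With this in hand the first equivalence is linear algebra. Writing an off-diagonal entry as $e_1\lambda+e_{-1}\lambda^{-1}$, the conditions $M(1)=0$ and $M'(1)=0$ read $e_1+e_{-1}=0$ and $e_1-e_{-1}=0$, forcing $e_1=e_{-1}=0$; writing a diagonal entry as $p\lambda^2+q+r\lambda^{-2}$, the derivative condition gives $p=r$, equation \eqref{eq:uuprimevvprime} gives $p=0$ hence $r=0$, and $M(1)=0$ then gives $q=0$. So the three hypotheses together annihilate $M$, and the converse is immediate since $uu'-vv'$ is one of its coefficients. The one point needing care is the translation between the stated condition on $\V'\U-\U'\V$ and its derivative and the coefficients of the polynomial $M$: since $\alpha(1)^2=|a|^2+2+u^2+u^{-2}>0$ and $\beta(1)\neq0$ (the latter being forced by $\V(1)$ being defined), one has $M(1)=\alpha(1)\beta(1)(\V'\U-\U'\V)(1)$ and, once $(\V'\U-\U'\V)(1)=0$, also $M'(1)=\alpha(1)\beta(1)\,\tfrac{d}{d\lambda}(\V'\U-\U'\V)\big|_{\lambda=1}$, so the two formulations coincide.

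The second equivalence runs identically, with value-plus-derivative at $1$ replaced by the two values at $\lambda_0$ and $\lambda_0^{-1}$. For the off-diagonal, $M(\lambda_0)=M(\lambda_0^{-1})=0$ is the linear system for $(e_1,e_{-1})$ with matrix $\left(\begin{smallmatrix}\lambda_0 & \lambda_0^{-1}\\ \lambda_0^{-1}&\lambda_0\end{smallmatrix}\right)$ of determinant $\lambda_0^2-\lambda_0^{-2}$; for the diagonal, after using $p=0$ from \eqref{eq:uuprimevvprime}, the two evaluations give $q+r\lambda_0^{\mp2}=0$, whose difference is $r(\lambda_0^2-\lambda_0^{-2})=0$. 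Both determinants vanish precisely when $\lambda_0^4=1$, i.e. $\lambda_0\in\{\pm1,\pm i\}$, which is exactly the excluded set; for any other $\lambda_0$ one again concludes $e_1=e_{-1}=q=r=0$. I expect no real obstacle beyond the leading-coefficient bookkeeping in the first paragraph and the harmless nondegeneracy check $\alpha(1)\beta(1)\neq0$; the structural input doing all the work is the $\sigma_3$-symmetry confining $M$ to the five monomials $\lambda^{-2},\dots,\lambda^2$.
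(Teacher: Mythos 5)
Your proof is correct, and it fills a genuine gap rather than paralleling an existing argument: the paper introduces this statement only as a ``technical lemma, which can be easily checked'' and gives no proof at all, so your write-up is precisely the missing verification. The two points carrying all the weight both check out. First, clearing denominators is legitimate because $\alpha'=\alpha$ and $\beta'=\beta$, so $\V'\U-\U'\V=(\alpha\beta)^{-1}M$ with $M=Q'P-P'Q$ a Laurent-polynomial matrix, and $\alpha\beta$ vanishes at only finitely many points of the circle (on $S^1$ one has $\alpha(1)^2=|a|^2+2+u^2+u^{-2}\ge 4$ and $\beta^2\ge |b|^2$), so vanishing of $M$ on a cofinite subset forces $M\equiv 0$. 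Second, the leading-coefficient computation is right: the $\lambda^2$-coefficient of $M$ is $\operatorname{diag}\bigl(-i(vv'-uu')/(uv),\,-i(uu'-vv')/(u'v')\bigr)$, a nonzero multiple of $uu'-vv'$ since $u,v,u',v'>0$. In fact the $\lambda^{-2}$-coefficient is likewise a multiple of $uu'-vv'$, so condition \eqref{eq:uuprimevvprime} already kills \emph{both} extreme diagonal coefficients; your use of the derivative condition to get $p=r$ is therefore slightly redundant (though harmless), and that condition is genuinely needed only for the off-diagonal entries. With the $\sigma_3$-parity confining the diagonal to $\{\lambda^{-2},\lambda^0,\lambda^{2}\}$ and the off-diagonal to $\{\lambda^{-1},\lambda\}$, your linear-algebra count --- value plus derivative at $\lambda=1$ for the first equivalence, evaluation at $\lambda_0$ and $\lambda_0^{-1}$ with determinant $\lambda_0^2-\lambda_0^{-2}\neq 0$ exactly when $\lambda_0^4\neq 1$ for the second --- is exactly what is required, and your observation that $\alpha(1)\beta(1)\neq 0$ is implicit in $\U(1),\V(1)$ being defined (they are unimodular) correctly settles the passage between $M$, its derivative, and the conditions as stated on $\V'\U-\U'\V$. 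One cosmetic remark: the paper's dot denotes the derivative in $\gamma$ with $\lambda=e^{i\gamma}$, whereas you differentiate in $\lambda$; at $\lambda=1$ these differ by the nonzero factor $i$, so the vanishing conditions coincide and nothing changes.
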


The following lemmas prove the existence and uniqueness of the Lax pair for a given quad of a discrete CMC surface.

\begin{lemma}[$\mathbb{R}^3$ version]  \label{lemma:quad-reconstruction-R3} 
  Let $Q = (\hat{F}, \hat{F}_1, \hat{F}_{12}, \hat{F}_2)$ be a CMC-1 quad in
  $\mathbb{R}^3$ with Gauss map $(\hat{N}, \hat{N}_1, \hat{N}_{12},
  \hat{N}_2)$. Let $\Phi$ be any frame for $\hat{N}$, namely $\hat{N} = -
  \Phi^{- 1} \k \Phi$ (such a frame at the point $\hat{F}$ is determined up to $U (1)$
  action). 
  Then there exist $\U(\lambda), \V(\lambda), \U'(\lambda),
  \V'(\lambda)$ satisfying (\ref{eq:discrete_Lax}), and thus generating the quad, together with its Gauss map by formulas (\ref{eq:discrete_immersion_R3}).
\end{lemma}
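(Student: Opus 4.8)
The goal is to reconstruct the Lax matrices $\U(\lambda), \V(\lambda), \U'(\lambda), \V'(\lambda)$ of the form \eqref{eq:Lax-pair} from a given CMC-1 quad $(\hat{F}, \hat{F}_1, \hat{F}_{12}, \hat{F}_2)$ with its Gauss map, so that the immersion formula \eqref{eq:discrete_immersion_R3} reproduces the quad. The plan is to proceed constructively: first extract the scalar data, then build the matrices, then verify the commutation relation \eqref{eq:discrete_Lax}.

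\emph{Step 1: Extract the geometric data.} From the quad I read off the discrete conformal metric coefficient $s$ (at all four vertices) and the edge labelings $A, B$, using the sign convention stated in the text ($s_1 s < 0$, $s_2 s > 0$). Via the identification \eqref{eq:identification} this immediately fixes the real quantities $u, u', v, v'$ and the labelings $\alpha = \alpha(1), \beta = \beta(1)$: namely $u^2 = -s_1 s$, $v^2 = s_2 s$ (and similarly $u'^2 = -s_2 s_{12}$, $v'^2 = s_1 s_{12}$), together with $\alpha = 2/\sqrt{-A}$, $\beta = 2/\sqrt{B}$. The constraint \eqref{eq:alpha2beta2} then recovers $|a|^2$ and $|b|^2$, since $\alpha^2 = |a|^2 + 2 + u^2 + u^{-2}$ at $\lambda = 1$ (and likewise for $\beta$). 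By construction $uu' = vv'$ holds, which via \eqref{eq:w} is exactly the compatibility needed for the existence of a vertex function $w$.

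\emph{Step 2: Fix the phases of $a, b$.} The moduli $|a|, |b|$ are determined, but the phases are not yet pinned down by the metric and labelings alone. The phases must be chosen so that the $\gamma$-derivative part of \eqref{eq:discrete_immersion_R3} reproduces the actual edge vectors $\mathd\hat{F}_{0i}$ and their orientation relative to $\hat{N}$. Here the angle computations \eqref{eq:<dF,N>_1} and \eqref{eq:<dF,N>_2} are the key input: they express $\langle \mathd\hat{F}_{0i}, \hat{N}\rangle$ in terms of $s$, $A$, $B$ alone, which must match the value predicted by the Lax-pair immersion formula (computed as in the proof of Theorem~\ref{thm:CMCR3} from the off-diagonal and diagonal entries of $\U, \V$). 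Matching these inner products fixes the real part of $a$ (resp.\ $b$), and the residual $U(1)$ freedom in the frame $\Phi$ absorbs the remaining phase ambiguity. This step is where one passes from the scalar data to genuine matrix data, and it is the delicate point: one must verify that the $U(1)$ gauge freedom is exactly enough to accommodate the phase, neither over- nor under-determined.

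\emph{Step 3: Verify commutation.} Having built $\U, \V, \U', \V'$ of the prescribed form, I must check \eqref{eq:discrete_Lax}. By Lemma~\ref{lemma:commutation-lambda} it suffices to verify $uu' = vv'$ (already secured in Step~1) together with the commutation of the matrices and their $\gamma$-derivatives \emph{at $\lambda = 1$ only}. This reduces an a priori $\lambda$-dependent identity to a finite check on the reconstructed constant matrices, which follows because both sides encode the same closing-up condition $\hat{F}_{12}$ reached along either pair of edges — this is precisely the planarity/circularity of the quad. I expect \textbf{Step~2} to be the main obstacle: reconciling the phase degrees of freedom of $a, b$ with the $U(1)$ frame ambiguity, and confirming that the reconstructed immersion formula genuinely returns the original edges rather than merely the correct edge lengths and metric.
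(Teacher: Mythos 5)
Your Steps~1 and~3 follow the paper's architecture (identify $u,v,u',v',\alpha,\beta$ from $s,A,B$ via \eqref{eq:identification}, then reduce the commutation check to $\lambda=1$ via Lemma~\ref{lemma:commutation-lambda}), but Step~2 --- which you yourself flag as the delicate point --- is wrong as stated, and it is precisely where the paper does something different. The inner products $\langle \mathd\hat{F}_{0i},\hat{N}\rangle$ carry no information about $a$ or $b$: in the frame $\Phi$ they are the $\k$-components (diagonal entries) of $\Phi\,\mathd\hat{F}_{0i}\,\Phi^{-1}$, e.g. $\frac{2u}{\alpha}\frac{u+u^{-1}}{\alpha}$ for the first edge, and these are independent of $a$. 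So ``matching these inner products'' cannot fix the real part of $a$, nor any part of it. Worse, the residual $U(1)$ gauge freedom rotates $a$ and $b$ simultaneously ($a\mapsto e^{-2i\theta}a$, $b\mapsto e^{-2i\theta}b$), so it is a single phase and cannot absorb two independent ambiguities: the relative phase $\arg a - \arg b$ is gauge-invariant geometric data (it encodes the angle between the two edges emanating from $\hat{F}$ inside the tangent plane), and it is invisible to the scalars $s,A,B$ and the normal projections you work with. The paper's construction runs in the opposite direction: $a$ is read off, as a full complex number, from the off-diagonal entries of the conjugated edge vector via $\Phi\,\mathd\hat{F}_{01}\,\Phi^{-1}=-\frac{2u}{\alpha}\,\i\,\U$ (equation~\eqref{eq:def-U}); the geometric identity \eqref{eq:<dF,N>_1} is invoked only as the consistency check that the diagonal entry of this known matrix equals the value forced by the Lax shape \eqref{eq:Lax-pair} --- without that check, the matrix $\U$ so defined would fail to be of the prescribed form. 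The same holds for $b$ via \eqref{eq:<dF,N>_2}. So the roles of the edge vectors and of the inner-product formulas are exactly reversed relative to your proposal.

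Two further omissions. First, you never construct $\U'(\lambda),\V'(\lambda)$: Step~1 gives $u',v'$ but not $a',b'$; for those you need frames at $\hat{F}_1,\hat{F}_2$, and you must first prove that $\Phi_1=\U\Phi$ and $\Phi_2=\V\Phi$ are indeed frames for $\hat{N}_1,\hat{N}_2$, which the paper does using the Koenigs relation $\hat{N}_1-\hat{N}=-(1+u^{-2})\,\mathd\hat{F}_{01}$, reduced to the matrix identity $-\U^{-1}\k\U+\k=-\frac{2}{\alpha}\frac{u^2+1}{u}\,\i\,\U$. Second, in Step~3 your appeal to the ``closing-up condition'' accounts for only half of what Lemma~\ref{lemma:commutation-lambda} requires: the closure of the quad $G=\hat{F}+\frac{1}{2}\hat{N}$ yields the \emph{derivative} commutation at $\lambda=1$, whereas the zeroth-order identity $\V'(1)\U(1)=\U'(1)\V(1)$ is obtained separately, by comparing the cross-ratio computed from the matrices with its known value $-\beta^2/\alpha^2$ (reversing the computation in Theorem~\ref{thm:CMCR3}).
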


\begin{proof}
  In the following, $\mathcal{U}$, $\alpha$, etc. will denote the usual quantities evaluated 
  at $\lambda=1$, and we will write specifically $\mathcal{U}(\lambda)$ when considering the loop.
  
  As we have demonstrated above (geometric) CMC-1 quad $Q$ in $\mathbb{R}^3$, together with its Gauss map determines 
  $\alpha, \beta, u, v, u', v'$ so only $a, b$ remain to be found\footnote{Although $| a |$ and $| b |$ are given by
  \eqref{eq:alpha2beta2}.}. The Lax matrix $\U$ can be determined uniquely 
  from $\Phi$, through equation~\eqref{eq:def-U}:
  \[ 
  	  \Phi \mathd \hat{F}_{01} \Phi^{- 1} = - \frac{2 u}{\alpha} \i \U
	  = \frac{2i u}{\alpha^2} \begin{pmatrix}
       u + u^{- 1} &  \bar{a}\\
       a & - (u + u^{- 1})
     \end{pmatrix}. 
  \] 
  Indeed, the length of the edge is $| 2 u / \alpha |$ by (\ref{eq:identification}), so that $a$ is here 
  only to fix the directions in the $(\i, \j)$ plane, provided the coordinate along 
  the $(-\k)$ axis (i.e. the projection of $\mathd \hat{F}_{01}$ along $\hat{N}$) is
  \[ 
  \frac{2 u}{\alpha}  \frac{u + u^{- 1}}{\alpha}. 
  \]
  This fact follows from  (\ref{eq:<dF,N>_1}). 
  So that $a$ is now fixed, though it depends on the $\Phi$ gauge. Similarly, 
  $\mathd \hat{F}_{02}$ determines $\V$  and $b$. 


  Let us now check that $\Phi_1$ and $\Phi_2$ defined through $\U$ and $\V$ 
  are frames for $\hat{N}_1$ and $\hat{N_2}$ respectively. By property of the Koenigs net,
  $\hat{N}_1 - \hat{N} =  - (1+u^{-2}) \mathd \hat{F}_{01}$, and, conjugating by $\Phi$,
  we need to check that
  \[
  - \U^{-1} \k \U + \k = - \frac{2}{\alpha} \frac{u^2+1}{u} \i \U
  \]
  which holds due to the specific form of $\U$, and similarly for $\V$.
  From $\Phi_1$ and $\Phi_2$, we derive $\U',\V'$ as above, with their specific form. 
  
  Having defined all their coefficients, the Lax matrices are fully determined, 
  and there remains only to check the commutation property (\ref{eq:discrete_Lax}) for all $\lambda$. 
  This can be done using Lemma~\ref{lemma:commutation-lambda}.

  By (\ref{eq:identification}) $u u' = v v'$ holds. Comparing the cross ratio written in terms of 
  $\U$, $\V$, $\U'$, $\V'$ and its known value $-\beta^2/\alpha^2$ proves 
  the commutation for $\lambda=1$ (reverse the proof of Theorem~\ref{thm:CMCR3}). 
  This shows also that $\hat{N}_{12} = - \Phi_{12}^{-1} \k \Phi_{12}$, 
  where $\Phi_{12} = \V' \U \Phi = \U' \V \Phi$.
  The derived commutation property is a consequence of the additive commutation relation 
  \[
  \mathd \hat{F}_{01} + \mathd \hat{F}_{1, 12} = \mathd \hat{F}_{02} + \mathd \hat{F}_{2, 12} \, .
  \]
  For simplicity, we apply it to $G = \hat{F} + \frac{1}{2}  \hat{N} 
  = - \Phi^{- 1}  \frac{\partial \Phi}{\partial \gamma}_{| \gamma = 0 }$, which is a well-defined 
  quad, and hence closes:
  \begin{eqnarray*}
  0 & = & \Phi (\mathd G_{01} + \mathd G_{1, 12} - \mathd G_{02} - \mathd
  G_{2, 12}) \Phi^{- 1}\\
  & = & \mathcal{V}^{- 1}  \dot{\mathcal{V}} +\mathcal{V}^{- 1}
  \mathcal{U}'^{- 1}  \dot{\mathcal{U}}' \mathcal{V}-\mathcal{U}^{- 1} 
  \dot{\mathcal{U}} -\mathcal{U}^{- 1} \mathcal{V}'^{- 1}  \dot{\mathcal{V}}'
  \mathcal{U}\\
  & = & \mathcal{U}^{- 1} \mathcal{V}'^{- 1}  (\mathcal{U}' 
  \dot{\mathcal{V}} + \dot{\mathcal{U}}' \mathcal{V}-\mathcal{V}'
  \dot{\mathcal{U}} - \dot{\mathcal{V}}' \mathcal{U})
  \end{eqnarray*}
  where we use twice the commutation at order zero.
  
  We have thus determined the Lax matrices (uniquely, once a frame $\Phi$ at $\hat{F}$ is set).
  The CMC-1 quad they generate is the one we started from.
  Note that, although we have started as usual with the lower left vertex, this
choice plays no role, and we might have fixed $\Phi_1$, $\Phi_2$ or $\Phi_{12}$ 
and recovered the rest similarly. 
\end{proof}

\begin{lemma}[$\mathbb{S}^3$ version]
  \label{lemma:quad-reconstruction-S3} 
  Let $Q = (F, F_1, F_{12}, F_2)$ be a CMC or minimal quad in $\mathbb{S}^3$ 
  with Gauss map $(N, N_1, N_{12}, N_2)$. Let $(\phi, \phi')$ be any frame 
  at the vertex $F$, i.e. any couple in $SU (2)$ such that $F = \phi'^{- 1} M \phi$ 
  and $N = - \phi'^{- 1} \k M \phi$ (such a pair is determined up to $U(1)$ 
  action). Then there exist $\U(\lambda), \V(\lambda), \U'(\lambda),
  \V'(\lambda)$ satisfying (\ref{eq:discrete_Lax}), and thus generating the quad, together with its Gauss map by formulas (\ref{eq:discrete_immersion_S3}).
\end{lemma}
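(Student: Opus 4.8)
The plan is to follow verbatim the strategy of the Euclidean Lemma~\ref{lemma:quad-reconstruction-R3}, adapting it to the two-point immersion formula \eqref{eq:discrete_immersion_S3}. First I would extract from the geometry everything except the arguments of $a,b,a',b'$. The mean curvature $H$ of the quad fixes $\gamma_1$ through $H=\cot(2\gamma_1)$, hence $\lambda_1$, $\lambda_2=\lambda_1^{-1}$ and the matrix $M$; the discrete conformal metric $s$ of $F$ together with \eqref{eq:vertex-metric-S3} yields $u,v,u',v'$; and the edge lengths \eqref{eq:edge_lengths_H}--\eqref{eq:edge_lengths_0} give $\alpha,\beta$ (equivalently $|a|,|b|$ via \eqref{eq:alpha2beta2}). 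Thus, exactly as in the Euclidean case, only the phases of $a$ and $b$ remain to be determined.

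Second, I would reconstruct the loops $\U(\lambda),\V(\lambda)$ from the frame $(\phi,\phi')$, where $\phi'=\Phi(\lambda_1)$, $\phi=\Phi(\lambda_2)$, and from the edges of $F$. Using the edge identities established in the proof of Theorem~\ref{thm:CMCS3}, namely $M\U(\lambda_2)-\U(\lambda_1)M=-\frac{2u\sin(2\gamma_1)}{\alpha}\i$, conjugating the known edge $F_1-F$ back by the frame gives
\[
\phi'(F_1-F)\phi^{-1}=-\frac{2u\sin(2\gamma_1)}{\alpha}\,\U(\lambda_1)^{-1}\i,
\]
in which the scalar prefactor is already known; this isolates $\U(\lambda_1)^{-1}\i$, hence $\U(\lambda_1)$, hence $a$, and therefore the whole loop $\U(\lambda)$ through its prescribed form \eqref{eq:Lax-pair}. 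The analogous computation with $F_2-F$ and the $\j$-identity fixes $\V(\lambda)$ and $b$. I would then check that $\Phi(\lambda_i)_1=\U(\lambda_i)\Phi(\lambda_i)$ and $\Phi(\lambda_i)_2=\V(\lambda_i)\Phi(\lambda_i)$ are frames for $(F_1,N_1)$ and $(F_2,N_2)$; this is the counterpart of the Gauss-map check in Lemma~\ref{lemma:quad-reconstruction-R3} and should follow because the parametrized form \eqref{eq:Lax-pair} ties the $N$-edges to the $F$-edges by precisely the $u,v$-dependent ratios already matched to the geometry, the orthogonality $F\perp N$ being built into the immersion formula. The frames at $F_1$ and $F_2$ then determine $\U',\V'$ in the same way.

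Third, I would verify the commutation \eqref{eq:discrete_Lax} for all $\lambda$ through Lemma~\ref{lemma:commutation-lambda}. The relation $uu'=vv'$ (equation \eqref{eq:uuprimevvprime}) is immediate from \eqref{eq:vertex-metric-S3}. Commutation at $\lambda_1$ follows by reversing the cross-ratio computation of Theorem~\ref{thm:CMCS3}: expressing $\operatorname{cr}(F,F_1,F_{12},F_2)$ through $\U,\V,\U',\V'$ and equating it to the known value $-\beta^2/\alpha^2$ should force the two frames reaching $F_{12}$ (via $\V'\U$ and via $\U'\V$) to agree, not merely up to the $U(1)$ stabilizer of $(F_{12},N_{12})$. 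The second value required by the lemma, $\lambda_2=\lambda_1^{-1}$, I would obtain from the closing of the quad $\mathd F_{01}+\mathd F_{1,12}=\mathd F_{02}+\mathd F_{2,12}$, exactly as the derived commutation was obtained in Lemma~\ref{lemma:quad-reconstruction-R3}. Since a non-degenerate $\mathbb{S}^3$ net has $\sin(2\gamma_1)\neq 0$, one has $\lambda_1\notin\{\pm 1,\pm i\}$ in both the CMC and the minimal ($\gamma_1=\pi/4$) cases, so the two-value form of Lemma~\ref{lemma:commutation-lambda} applies and extends commutation to all $\lambda$.

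The hard part will be the coupling of the two spectral evaluations: unlike the Euclidean case, where a single $\U$ appears in each edge, here every edge mixes $\U(\lambda_1)$ and $\U(\lambda_2)$ through $M\U(\lambda_2)-\U(\lambda_1)M$, so I must confirm that the geometric data over-determines nothing yet still pins down $a$ unambiguously, and that the two frames reaching $F_{12}$ coincide \emph{exactly} rather than up to a residual $U(1)$ rotation about the normal. Removing this $U(1)$ ambiguity---supplied at $\lambda_1$ by the scalar cross-ratio value and at $\lambda_2$ by the additive closing relation---is the crux of the argument; the rest is the routine algebra of matching the parametrized Lax matrices to the reconstructed edges.
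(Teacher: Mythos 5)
Your plan is, in most respects, the paper's own proof: determine $\gamma_1$ from $H$, then $u,v,u',v'$ and $\alpha,\beta$ from the conformal metric and edge lengths, extract $a$ and $b$ by conjugating $\mathd F_{01}$, $\mathd F_{02}$ by the frame $(\phi,\phi')$, propagate the frames to obtain $\U',\V'$, prove commutation at $\lambda_1$ by reversing the cross-ratio computation of Theorem~\ref{thm:CMCS3}, and conclude with the two-value version of Lemma~\ref{lemma:commutation-lambda}. Your one genuine departure is the second spectral value: the paper obtains commutation at $\lambda_2=\lambda_1^{-1}$ by passing to the inverted quad $F^{-1}$ with normal $N^{-1}$, which exchanges $\lambda_1$ and $\lambda_2$, whereas you use the additive closing of the quad, as in Lemma~\ref{lemma:quad-reconstruction-R3}. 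Your variant does work: writing each of the four edges in the form $\Phi(\lambda_1)^{-1}[\,\cdot\,]\Phi(\lambda_2)$ (valid by construction of $\U,\V,\U',\V'$) and summing, the single-edge terms cancel and the closing relation reduces to
\[
P_1^{-1} M P_2 = Q_1^{-1} M Q_2, \qquad
P_i = \V'(\lambda_i)\,\U(\lambda_i), \quad Q_i = \U'(\lambda_i)\,\V(\lambda_i),
\]
so commutation at $\lambda_1$ (i.e.\ $P_1=Q_1$) immediately forces $P_2=Q_2$; and since $H=\cot(2\gamma_1)$ is finite, $\lambda_1\notin\{\pm 1,\pm i\}$, so Lemma~\ref{lemma:commutation-lambda} applies. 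This is arguably tidier than the paper's inversion argument and makes the $\mathbb{R}^3$ and $\mathbb{S}^3$ proofs fully parallel.

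There is, however, a genuine gap at the step you defer as ``routine algebra''. Defining $\U(\lambda_1)$ through $\phi'\,\mathd F_{01}\,\phi^{-1}=\U(\lambda_1)^{-1}\bigl(-\tfrac{2u\sin(2\gamma_1)}{\alpha}\i\bigr)$ produces \emph{some} unimodular matrix, but nothing so far guarantees that this matrix has the form \eqref{eq:Lax-pair} at $\lambda=\lambda_1$ with the already-fixed values of $u$ and $\alpha$: only its $(\i,\j)$-part is at your disposal (that is $a$); its $(\1,\k)$-part is dictated by the geometry and must come out right. Concretely, one must verify that the projections of $\mathd F_{01}$ onto $N$ and onto $F$ equal $\frac{\|\mathd F_{01}\|}{\alpha}\,(u^{-1}+u\cos 2\gamma_1)$ and $-\frac{\|\mathd F_{01}\|}{\alpha}\,u\sin 2\gamma_1$ respectively --- these are exactly \eqref{eq:cos_theta} and \eqref{eq:cos_xi} --- and similarly for $\mathd F_{02}$. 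This is precisely where the CMC/minimal hypothesis enters beyond fixing scalars: the paper proves \eqref{eq:cos_theta} from the trapezoid geometry of $F$ and its Christoffel dual $F+\frac{1}{H}N$ (resp.\ $N$ in the minimal case), in analogy with \eqref{eq:<dF,N>_1}, and proves \eqref{eq:cos_xi} from $\|F\|=\|F_1\|=1$. Your proposal flags this point (``the hard part'') but never discharges it, and the justification you sketch does not suffice: parallelism of the $F$- and $N$-edges and the pointwise orthogonality $F\perp N$ do not by themselves yield these two scalar identities. Supplying them is what completes the argument.
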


\begin{proof}
	The proof goes along the same lines as in Lemma \ref{lemma:quad-reconstruction-R3}, 
	with a few specificities; in particular, whereas in $\mathbb{R}^3$ scaling allows 
	us to freely set the mean curvature to $1$, in $\mathbb{S}^3$, we use 
	the mean curvature to determine $\lambda_1 = e^{i \gamma_1}, \lambda_2 = e^{i \gamma_2}$ 
	as in theorem \ref{thm:CMCS3}: $\gamma_1 = - \gamma_2 = \frac{1}{2} \mathrm{arccot} H$
  ($\gamma_1 = \pi / 4$ if $H = 0$), $\gamma_1$ being taken in $[0,\pi/2]$.
	A (geometric) CMC-$H$ quad $Q$ in $\mathbb{S}^3$,
  together with its Gauss map, comes with a discrete conformal metric $s$ and canonical
  edge labelings $A, B$, such that the edge lengths in both directions are
  $Ass_1$ and $Bss_2$ respectively. This allows us to determine $u, v$ by
  \begin{equation}  \label{eq:u_v_s}
   u^2 = - ss_1  \sqrt{\frac{1 + H^2}{H^2}} = - \frac{ss_1}{\cos (2
     \gamma_1)}, \quad v^2 = ss_2  \sqrt{\frac{1 + H^2}{H^2}} =
     \frac{ss_2}{\cos (2 \gamma_1)} \text{ if } H \neq 0, 
 \end{equation}
  \[ u^2 = - ss_1, \quad v^2 = ss_2 \quad \text{ if } H = 0, 
  \]
  and similarly $u',v'$. We set positive $\alpha, \beta$ such that 
  \begin{equation}	\label{eq:alpha_A}
  \alpha^2 = - \frac{4 \sin^2 
  (2 \gamma_1)}{A \cos (2 \gamma_1)},\quad \beta^2 = \frac{4 \sin^2 (2
  \gamma_1)}{B \cos (2 \gamma_1)}
  \text{ if } H \neq 0,
  \end{equation} 
  $$
 \alpha^2 = - \frac{4}{A},\quad \beta^2
  = \frac{4}{B} \text{ if } H = 0.
  $$ 
  In the following, we will favor the
  notations in terms 
  of $\gamma_1$:
  \begin{align*}
  \phi' \mathd F_{01} \phi^{- 1} 
  &= \U (\lambda_1)^{- 1} M \U (\lambda_2) - M 
  = \U (\lambda_1)^{- 1}  \left( - \frac{2 u \sin (2 \gamma_1)}{\alpha} \i \right) 
  \\
  & = \frac{2 u \sin (2 \gamma_1)}{\alpha}  \frac{1}{\alpha}  \begin{pmatrix}
       i (\lambda_1 u + \lambda_1^{- 1} u^{- 1}) & \bar{a}\\
       a & - i (\lambda_1 u^{- 1} + \lambda_1^{- 1} u)
     \end{pmatrix}
  \end{align*}
  We recognize the length of the edge, which proves incidentally that the right
  hand side matrix is unimodular, and so is $\U (\lambda_1)$. As in
  $\mathbb{R}^3$, $a$ is given by the $(\i,
  \j)$ component, provided the $(\1,
  \k)$ component is correct. The latter is equivalent to the following two conditions:
  \begin{enumerate}
    \item the angle $\theta$ between $\mathd F_{01}$ and $N$ satisfies
    \begin{align} \label{eq:cos_theta}
      \cos \theta 
      & =  \frac{\langle \mathd F_{01}, N \rangle}{\| \mathd F_{01} \|} 
      = \frac{1}{\alpha}  (u^{- 1} + \cos (2 \gamma_1) u) \, ,
    \end{align}
    \item the angle $\chi$ between $\mathd F_{01}$ and $F$ satisfies
    \begin{align} \label{eq:cos_xi}
      \cos \chi 
      &= \frac{\langle \mathd F_{01}, F\rangle }{\| \mathd F_{01} \|}  
      = - \frac{u \sin (2 \gamma_1)}{\alpha} \, \cdotp
    \end{align}
  \end{enumerate}
  
  Geometric derivation of $\cos\theta$ is analogous to (\ref{eq:<dF,N>_1}). 
  In particular in the CMC case $H=\cot(2\gamma_1)\neq 0$ the dual isothermic surface 
  of $F$ is $F+\frac{1}{H}N$, and (\ref{eq:<dF,N>_1}) is modified to
  $$
  \frac{2}{H}\cos \theta=\|\mathd F_{01}\|+\|\mathd F^*_{01}\|=\|\mathd F_{01}\|(1+\frac{1}{s_1s})=\sqrt{s_1sA}(1+\frac{1}{s_1s}). 
  $$
 Substituting (\ref{eq:u_v_s}),(\ref{eq:alpha_A}) we arrive at (\ref{eq:cos_theta}).
 
%
  
  Identity (\ref{eq:cos_xi}) follows directly from $\| F \| = \| F_1 \| = 1$:
  \[ \cos \chi = \langle F, \mathd F_{01} \rangle = - \frac{1}{2}  \| \mathd
     F_{01} \|^2 = - \frac{u \sin (2 \gamma_1)}{\alpha}. \]
  Along the other coordinate, we have $v$ and non-crossing trapezoids, but the
  reasoning is analogous, and fixes $b$.

	Setting $\Phi(\lambda_1) = \phi'$ and $\Phi(\lambda_2) = \phi$, 
	 we check that 
	$N_1 = - \Phi_1(\lambda_1)^{- 1} \k M \Phi_1(\lambda_2)$, where
	$\Phi_1(\lambda_i) = \U(\lambda_i) \Phi(\lambda_i)$. Indeed, this amounts to reversing 
	the proof in Theorem \ref{thm:CMCS3}. We have the analogous result for $N_2$, 
	which allows us to compute $a',b'$ and therefore $\U'(\lambda)$ and $\V'(\lambda)$.

	To prove (\ref{eq:discrete_Lax}) we reverse the calculation in the proof of
	Theorem~\ref{thm:CMCS3}, which shows that $\V'(\lambda_1) \U(\lambda_1) 
	= \U'(\lambda_1) \V(\lambda_1)$. 
	
	To prove the analogous result at $\lambda_2$, 
	we consider the symmetric (see (\ref{eq:quaternionic_S^3})) surface $F^{-1}$ with the normal $N^{-1}$. This exchanges $\lambda_1$ with $\lambda_2$ while
	preserving all the metric and affine properties. We conclude with 
	Lemma~\ref{lemma:commutation-lambda}.
	
\end{proof}

\begin{proof}[Proof of the Theorem]~\\
  The same strategy works for the Euclidean and spherical nets, so we will 
  describe it in the Euclidean case. We start by constructing the Lax pair 
  on one quad $Q = (\hat{F}, \hat{F}_1, \hat{F}_{12}, \hat{F}_2)$. 
  For any choice of compatible
  frame $\Phi$, i.e. $\hat{N} = - \Phi^{- 1} \k \Phi$ at the base
  vertex, we can find a (unique) Lax pair generating $Q$, according to the
  Lemmas \ref{lemma:quad-reconstruction-R3} or
  \ref{lemma:quad-reconstruction-S3}. This in turn determines a Lax pair on
  adjacent faces, which generates the corresponding quads, and therefore the
  whole Q-net.
  
  This reasoning holds provided that we do not obtain a contradiction when
  the Gauss map is given at more than one vertex. E.g. once $Q = (\hat{F},
  \hat{F}_1, \hat{F}_{12}, \hat{F}_2)$ has been constructed, we may construct
  the adjacent quad $Q_1 = (\hat{F}_1, \hat{F}_{11}, \hat{F}_{112},
  \hat{F}_{12})$ by starting with $\hat{N}_1$ or with $\hat{N}_{12}$. However
  the two quads constructed this way must agree, because $\hat{N}_1$ fully
  determines $\hat{N}_{12}$ (knowing the corresponding edge), see lemma
  \ref{lemma:quad-reconstruction-R3}.
\end{proof}

\medskip

\section{The discrete Lawson correspondence}

The results shown above allow us to define a discrete Lawson correspondence between 
Q-nets in $\mathbb{S}^3$ and $\mathbb{R}^3$.
\begin{theorem}   \label{thm:discrete_Lawson}
Let $F$ be a minimal Q-net in $\mathbb{S}^3$ with discrete conformal metric $s$. 
Then there exists a constant mean curvature Q-net $\hat{F}$ in $\mathbb{R}^3$ 
with constant mean curvature $H = 1$ and the same discrete conformal metric $s$. 
More generally, this correspondence takes any Q-net of constant mean curvature $H$ 
lying in the sphere of curvature $\kappa$ to a Q-net of constant mean curvature $H'$ 
in the sphere of curvature $\kappa'$, provided $H^2 + \kappa = H'^2 + \kappa'$. 
The Euclidean case corresponds to $\kappa'=0$. Additionally, the two Q-nets 
$F$ and $\hat{F}$ are given by formulas (\ref{eq:discrete_immersion_S3}) and 
(\ref{eq:discrete_immersion_R3}) with the same Lax matrices $\U(\lambda)$ and $\V(\lambda)$.
\end{theorem}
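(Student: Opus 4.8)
The plan is to use the reconstruction result (Theorem~\ref{thm:reconstruction}) as a bridge and to exploit a coincidence visible in Theorems~\ref{thm:CMCR3} and~\ref{thm:CMCS3}: a single Lax pair produces both a spherical and a Euclidean net whose discrete conformal metrics obey \emph{identical} vertex relations in terms of the Lax data $u,v$. I will first treat the core statement, a minimal net $F$ in $\mathbb{S}^3$ giving rise to a CMC net $\hat F$ in $\mathbb{R}^3$ with $H=1$. Applying Theorem~\ref{thm:reconstruction} to $F$ at the minimal value $\gamma_1=\pi/4$ yields a frame $\Phi(\lambda)$ and Lax matrices $\U(\lambda),\V(\lambda)$ satisfying (\ref{eq:Lax-pair}) and (\ref{eq:discrete_Lax}) for which $F$ is recovered through (\ref{eq:discrete_immersion_S3}); by the minimal case of Theorem~\ref{thm:CMCS3} its discrete conformal metric obeys $ss_1=-u^2$ and $ss_2=v^2$.

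Next I would feed the \emph{same} frame $\Phi$ into the Euclidean immersion formula (\ref{eq:discrete_immersion_R3}) at $\lambda=1$, producing a pair $(\hat F,\hat N)$. Theorem~\ref{thm:CMCR3} then guarantees that this is a CMC net in $\mathbb{R}^3$ with $H=1$ whose discrete conformal metric $\tilde s$ satisfies $\tilde s\tilde s_1=-u^2$ and $\tilde s\tilde s_2=v^2$ with the very same $u,v$. Since both nets are built from one Lax pair, $s$ and $\tilde s$ satisfy identical vertex recursions and therefore agree up to the black-white rescaling ambiguity under which the discrete conformal metric is defined; choosing matching representatives gives $\tilde s=s$. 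As $\U(\lambda),\V(\lambda)$ were used unchanged, the two nets are expressed by (\ref{eq:discrete_immersion_S3}) and (\ref{eq:discrete_immersion_R3}) with the same Lax matrices, which settles the core claim.

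For the general statement I would keep the reconstructed Lax data $u,v,a,b$ fixed and reach the remaining space forms by combining a spectral shift with an overall rescaling. Reconstruction from a CMC-$H$ net in the sphere of curvature $\kappa$ fixes, after rescaling to the unit sphere, a parameter $\gamma_1$ with $\cot(2\gamma_1)=H/\sqrt{\kappa}$, so the Lawson invariant reads $H^2+\kappa=\kappa\csc^2(2\gamma_1)$. For any target curvature $\kappa'$ I would choose $\gamma_1'$ by $\kappa'\csc^2(2\gamma_1')=\kappa\csc^2(2\gamma_1)$ and re-immerse the same $\U(\lambda),\V(\lambda)$ through (\ref{eq:discrete_immersion_S3}) at $\gamma_1'$, rescaled to radius $1/\sqrt{\kappa'}$; this gives a CMC-$H'$ net with $H'=\cot(2\gamma_1')\sqrt{\kappa'}$, whence $H'^2+\kappa'=\kappa'\csc^2(2\gamma_1')=H^2+\kappa$ by construction. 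The Euclidean target $\kappa'=0$ appears as the degeneration $\gamma_1'\to 0$, where $\lambda_1\to\lambda_2$ and (\ref{eq:discrete_immersion_S3}) collapses onto the derivative formula (\ref{eq:discrete_immersion_R3}) while $H'\to\sqrt{H^2+\kappa}$.

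The step I expect to cost the most is verifying that this spectral-shift-plus-rescaling genuinely produces a CMC net in the target space form---that edge-parallelism, circularity, and the Steiner mean curvature all survive the reparametrization, and that the collapse onto (\ref{eq:discrete_immersion_R3}) is exact rather than merely asymptotic. By contrast the core statement is essentially an assembly: global consistency of the reconstructed frame over $\mathbb{Z}^2$ is already furnished by Theorem~\ref{thm:reconstruction}, so once one observes that the minimal case of Theorem~\ref{thm:CMCS3} and Theorem~\ref{thm:CMCR3} yield the identical relations $ss_1=-u^2$, $ss_2=v^2$, equality of the discrete conformal metrics is immediate.
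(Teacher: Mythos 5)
Your core argument and your spherical-to-spherical construction follow essentially the same route as the paper: use Theorem~\ref{thm:reconstruction} to extract a Lax pair from the given net, re-immerse the \emph{same} Lax matrices (via \eqref{eq:discrete_immersion_R3} at $\lambda=1$, or via \eqref{eq:discrete_immersion_S3} at a shifted spectral value $\gamma_1'$ followed by scaling), and then read off equality of the conformal metrics from the coincidence of the factorization formulas \eqref{eq:vertexmetricR3} and \eqref{eq:vertex-metric-S3}, the metric being defined only up to the rescaling ambiguity. The invariance $H'^2+\kappa'=H^2+\kappa$ comes out of the same $\cos^2+\sin^2$ computation in both treatments, and your worry about parallelism, circularity and Steiner curvature surviving the spectral shift is unfounded: that is precisely what Theorem~\ref{thm:CMCS3} guarantees (the commutation \eqref{eq:discrete_Lax} holds for all $\lambda$ by Theorem~\ref{thm:reconstruction}), and uniform scaling manifestly preserves all of it.

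The one step that would fail as literally stated is your handling of the Euclidean target $\kappa'=0$ in the general case as ``the degeneration $\gamma_1'\to 0$, where \eqref{eq:discrete_immersion_S3} collapses onto \eqref{eq:discrete_immersion_R3}.'' The collapse is \emph{not} exact: at $\gamma_1'=0$ one has $\lambda_1=\lambda_2$ and $M=\1$, so \eqref{eq:discrete_immersion_S3} gives the constant map $F\equiv\1$ (the paper notes in the proof of Theorem~\ref{thm:CMCS3} that $\lambda_2=\pm\lambda_1$ yields constant maps). The Euclidean net arises only as an asymptotic limit of the \emph{rescaled} spherical nets, $\frac{1}{\sin(2\gamma_1')}(F^{\gamma_1'}-\1)\to\hat{F}$, which is exactly the content of the paper's Remark following the theorem --- supplementary information, not a step in the proof. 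The paper instead treats $\kappa'=0$ directly through the immersion formula of Theorem~\ref{thm:CMCR3}, which is precisely what you already did for your core claim; so the fix costs nothing: apply your own core-claim argument and compose with a homothety to reach $H'=\sqrt{H^2+\kappa}$, and drop the limit.
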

\begin{proof} 
The construction is a direct consequence of the previous theorems, 
but to see it clearly, we shall state a slightly modified version of
Theorems~\ref{thm:CMCS3} and \ref{thm:reconstruction}.

Let us start with the claim about Q-nets in spheres with different curvatures.
Any Lax pair as defined in (\ref{eq:Lax-pair}), and any choice
$\lambda_1 = e^{i \gamma_1}$ of the spectral parameter gives rises
to an constant mean curvature Q-net $(F^{\gamma_1},N)$ in the sphere
of curvature $\kappa = \sin^2(2 \gamma_1)$, obtained as a a scaled up version 
of the spherical net $F$ defined in Theorem~\ref{thm:CMCS3}:
\[
F^{\gamma_1} = \frac{1}{\sin (2 \gamma_1)} F 
= \frac{1}{\sin (2 \gamma_1)} \Phi (\lambda_1)^{- 1} M \Phi (\lambda_1^{- 1}),
\]
and $N$ remains the same. The mean curvature is $H = \cos (2 \gamma_1)$.
If $\gamma_1 = \pi/4$, this is the minimal Q-net in $\mathbb{S}^3$.
\\

By the same reasoning according to Theorems~\ref{thm:CMCS3} and \ref{thm:reconstruction} there exists infinitely many other
CMC Q-nets $(F^{\gamma'_1},N)$ of constant mean curvature
$H' = \cos (2 \gamma'_1)$ in the sphere of curvature $\kappa'=\sin^2(2 \gamma'_1)$ with the same coefficients $u,v$ in the Lax pair.

One checks easily that 
\[
H'^2 + \kappa' = \cos^2 (2 \gamma'_1) + \sin^2(2 \gamma'_1) = 1
= H^2 + \kappa.
\]
The most general case, where $H^2+\kappa \neq 1$ is obtained from the latter by 
direct scaling.
Finally, since the discrete conformal metric coefficient is defined geometrically up to general factor (see Sect.~\ref{sec:discrete_CMC_Q-nets}), the coincidence of the Lax matrix coefficients $u,v$ is equivalent to the preservation of the discrete conformal metric coefficient $s$.

The case of $\kappa'=0$ is dealt in exactly the same way, except that 
the immersion formula in Theorem~\ref{thm:CMCR3} is used for $\mathbb{R}^3$. 
We note now that the factorization formulas \eqref{eq:vertexmetricR3} and \eqref{eq:vertex-metric-S3} for minimal nets in $\mathbb{S}^3$ and CMC-$1$ nets in $\mathbb{R}^3$ coincide. The coincidence of the coefficients $u,v$ implies that the discrete conformal metric factors $s$ determined from these formulas are identical, in the same way as, in the smooth case, both surfaces are isometric.  

\end{proof}
\begin{remark}
Furthermore, the Q-net in $\mathbb{R}^3$ is the limit of the spherical CMC
nets, when the radius of the sphere increases to infinity, keeping the point
$\1$ fixed. Indeed, let $\gamma_1$ tend to zero. Then $H = \cos (2 \gamma_1)$
goes to $1$, and
\begin{align*}
  F - \1 & = \Phi (\lambda_1)^{- 1} M \Phi (\lambda_1^{- 1}) 
  = (\Phi + \gamma_1  \dot{\Phi} + o (\gamma_1))^{- 1}  (\1 +
  \gamma_1  \k + o (\gamma_1))  (\Phi - \gamma_1  \dot{\Phi} + o
  (\gamma_1)) - \1
  \\
  & = \gamma_1  (- 2 \Phi^{- 1}  \dot{\Phi} + \Phi^{- 1}  \k \Phi) + o(\gamma_1)
\end{align*}
so
\[ 
\frac{1}{\sin (2 \gamma_1)}  (F - \1) \sim \frac{\gamma_1}{2 \gamma_1}  
(- 2 \Phi^{- 1}  \dot{\Phi} + \Phi^{- 1}  \k \Phi) 
= - \Phi^{- 1}  \dot{\Phi} + \frac{1}{2} \Phi^{- 1}  \k \Phi = \hat{F} \, .
\]
\end{remark}

\bigskip

\begin{remark} \label{Calapso}
\em
At last, let us remark that this definition of the Lawson correspondence 
coincides with the seemingly very different one proposed in \cite{BuHRS,BuHR}, 
based on the Calapso transform and the conserved quantities formalism. 
We will not go into the details of the latter, but we will show that 
both definitions agree. Indeed, and despite their very different formulations, 
it suffices to show that the mean curvature, the discrete conformal metric and the labelings
change in the same way.

\medskip

Let $H=\cos(2 \gamma_1)$ and $H' = \cos(2 \gamma'_1)$ be the two corresponding mean curvatures 
as defined above. Since $u,v$ are common to both immersions, and $s s_1 = - u^2 H$ 
(resp. $s s_2 = v^2 H$), then $r = s H^{-1/2}$ takes the same values for both surfaces.
The Reader may check that this vertex function $r$ is the one used in \cite[\S 3 and \S 4.2]{BuHR},
which is invariant under the Calapso transform.
The edge labelings are $-\alpha(\gamma_1)^{-2},\beta(\gamma_1)^{-2}$ and
$-\alpha(\gamma_1)^{-2},\beta(\gamma_1)^{-2}$, up to a multiplicative constant~$c$. 
From~\eqref{eq:alpha2beta2}, we see that 
\[
\alpha(\gamma'_1)^2 - \alpha(\gamma_1)^2 = \cos(2 \gamma'_1) - \cos(2 \gamma_1) = H' - H,
\]
and similarly $\beta(\gamma'_1)^2 - \beta(\gamma_1)^2 = H - H'$. Choosing $c=-1$,
the edge labeling satisfy
\[
a_{01}' = \frac{1}{\alpha'^2} = \frac{1}{\alpha^2 + H' - H} =
\frac{1}{\frac{1}{a_{01}} + H' - H} = \frac{a_{01}}{1 + (H' - H) a_{01}},
\]
and similarly $a_{02}' = \frac{a_{02}}{1 - a_{02}  (H - H')}$, which is again the prescribed
behavior of the Calapso transform. Therefore both correspondences agree.
\end{remark}

\medskip

\paragraph{Conclusion and open questions}
~\\

In this paper we have established a discrete Lawson isometry between discrete isothermic minimal surfaces in ${\mathbb S}^3$ and discrete isothermic CMC surfaces in ${\mathbb R}^3$. The isometry is understood in the sense that both corresponding isothermic surfaces have the same discrete conformal metric coefficient. It is appealing to lift this correspondence to the level of frames as in the smooth case (Theorem~\ref{thm:Lawson_smooth}). However the isothermic parametrizations in \eqref{eq:Lawson_smooth} do not correspond: an isothermic surface in ${\mathbb S}^3$ corresponds to a CMC surface from the associated family. 

One way to reach that, and an important achievement by itself, would be to introduce geometrically a discrete metric for the associated families in ${\mathbb R}^3$ and ${\mathbb S}^3$ that generalizes the conformal metric coefficient $s$ of isothermic surfaces. It should be the same for the whole associated family. On the level of the Lax representation it is the coefficient $w$ in this paper. The next step then would be to find a discrete analogue of \eqref{eq:Lawson_smooth}.

Some progress in this direction has been achieved in \cite{HSFW} where the associated family in ${\mathbb R}^3$ was described as edge-constraint net with non-planar faces (see Section~\ref{sec:discrete_CMC_R3}). The curvatures were defined there but not the conformal metric. Here we are dealing with discrete isothermic surfaces in non-isothermic parametrization. Let us mention that such triangulated surfaces were recently introduced in \cite{LP}.


\end{document}